\newtheorem{theorem}{Theorem}[section]
\newtheorem*{thm}{Theorem}
\newtheorem{lemma}[theorem]{Lemma}
\newtheorem{proposition}[theorem]{Proposition}
\newtheorem{corollary}[theorem]{Corollary}
\theoremstyle{definition}
\newtheorem{remark}[theorem]{Remark}
\theoremstyle{comment}
\numberwithin{equation}{section}
\newcommand{\g}{\mathfrak{g}}
\newcommand{\n}{\mathfrak{n}}
\newcommand{\h}{\mathfrak{h}}
\newcommand{\rea}[2]{U_{#1}(#2)} 
\newcommand{\pth}[1]{{#1}^{[p]}} 
\newcommand{\ra}{\rightarrow}
\newcommand{\ev}[1]{{#1}_{\bar{0}}}
\newcommand{\od}[1]{{#1}_{\bar{1}}}
\newcommand{\osp}{\mathfrak{osp}}
\newcommand{\la}{\lambda}
\newcommand{\gl}{{\mathfrak{gl}}}
\newcommand{\Z}{ \mathbb Z }
\newcommand{\nc}{\newcommand}
 \nc{\La}{\Lambda}
 \nc{\ep}{\epsilon}
 \nc{\vep}{\varepsilon}
\def\Ddots{\mathinner{\mkern1mu\raise\p@
\vbox{\kern7\p@\hbox{.}}\mkern2mu
\raise4\p@\hbox{.}\mkern2mu\raise7\p@\hbox{.}\mkern1mu}}
\begin{document}
\title[Typical Blocks of Lie superalgebras]
{Typical Blocks of Lie Superalgebras in Prime Characteristic}

\author[Lei Zhao]{Lei Zhao}
\address{Department of Mathematics, University of Virginia,
Charlottesville, VA 22904} \email{lz4u@virginia.edu}

\begin{abstract}
For a type I basic classical Lie superalgebra $\g=\ev \g \oplus
\od\g$, we establish an equivalence between typical blocks of
categories of $\rea{\chi}{\g}$-modules and $\rea{\chi}{\ev
\g}$-modules. We then deduce various consequences from the
equivalence.
\end{abstract}

\maketitle
\date{}
  \setcounter{tocdepth}{1}

\section{Introduction}
The representation theory of Lie superalgebras over complex numbers
has been studied quite extensively in literature, starting from the
well-known character formula, due to Kac, for irreducible highest
weight representations with typical highest weights \cite{Kac2}. In
\cite{WZ1}, the author and Wang initiated the study of modular
representations of Lie superalgebras over an algebraically closed
field $K$ of characteristic $p>2$, by formulating a superalgebra
analogue of the Kac-Weisfeiler conjecture and establishing it for
the basic classical Lie superalgebras.

This paper is an attempt to study more closely and precisely the
structure of modular representations of Lie superalgebras; and it
deals with, as a natural starting point, the typical ones. In
particular, for a basic classical Lie superalgebra $\g=\ev \g \oplus
\od \g$ of type I (see \cite{Kac1} for a definition), we establish
an equivalence between the so-called typical blocks of $\g$-modules
over the field $K$ and the corresponding ones for $\ev \g$-modules.
Let us explain in more details.

Let $\g$ be a type I basic classical Lie superalgebra, i.e. $\g$ is
one of $\gl(m|n)$, $\mathfrak{sl}(m|n)$ or $\osp(2|2n)$. Then $\g$
is a restricted Lie superalgebra, and one can make sense the notions
of $p$-characters $\chi \in \ev \g^*$ and the corresponding reduced
enveloping superalgebras $\rea{\chi}{\g}$. Let $\g=\n^- \oplus \h
\oplus \n^+$ be a triangular decomposition of $\g$. We may assume
that a $p$-character $\chi$ satisfies $\chi(\ev \n^+)=0$ via the
conjugation of the adjoint group $\ev G$ of $\ev \g$ if necessary.

One of the distinguished features of $\g$ being type I is that $\g$
admits a natural $\Z$-grading
\[
\g=\g_{-1} \oplus \ev \g \oplus \g_1,
\]
with $\od \g =\g_{-1} \oplus \g_1$. Put $\g_+ = \ev \g \oplus \g_1$.
Let $M^0$ be a $\rea{\chi}{\ev \g}$-module. View it as a
$\rea{\chi}{\g_+}$-module with trivial $\g_1$-action. Then the
induction $\psi(M^0)= \rea{\chi}{\g}\otimes_{\rea{\chi}{\g_+}}M^0$
will give us a $\rea{\chi}{\g}$-module. The first main result
(Proposition~\ref{prop:Kac module}) of the paper is an
irreducibility criterion for $\psi(L^0)$ for an irreducible
$\rea{\chi}{\ev \g}$-module $L^0$, which states that the induced
module $\psi(L^0)$ is simple if and only the subspace $(L^0)^{\ev
\n^+}$ consists only weights which are typical (first defined in
\cite{Kac3}). It generalizes a classical result of Kac {\it loc.
cit.}.

Let $\rea{\chi}{\ev \g}$-$\text{mod}^{\text{ty}}$ (resp.
$\rea{\chi}{\g}$-$\text{mod}^{\text{ty}}$) be the full subcategory
of (finite-dimensional) $\rea{\chi}{\ev \g}$-modules (resp.
$\rea{\chi}{\g}$) whose composition factors are typical (see
Section~\ref{sec:ME} for the precise definition). The following is
the main theorem of the paper.

\begin{thm}[Theorem~\ref{thm:ME}]
The functor of taking $\g_1$-invariants,
\begin{equation*}
\qquad\phi=-^{\g_1}: \rea{\chi}{\g}\text{-}\text{mod}^{\text{ty}}
\rightarrow \rea{\chi}{\ev \g}\text{-}\text{mod}^{\text{ty}}
\end{equation*}
and the functor
\begin{equation*}\psi=\rea{\chi}{\g} \otimes_{\rea{\chi}{\g_+}} -:
\rea{\chi}{\ev \g}\text{-}\text{mod}^{\text{ty}} \rightarrow
\rea{\chi}{\g}\text{-}\text{mod}^{\text{ty}}
\end{equation*}
are inverse equivalence of categories.
\end{thm}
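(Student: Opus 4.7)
The strategy is to set up the adjunction $(\psi,\phi)$, verify that both functors are exact on the typical blocks, and show the unit and counit are isomorphisms on simple objects; a d\'evissage then extends everything to the whole subcategories.

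First I would verify the adjunction $\psi \dashv \phi$ at the level of all (not necessarily typical) modules. The usual Frobenius reciprocity gives $\mathrm{Hom}_{\rea{\chi}{\g}}(\psi M^0, N) \cong \mathrm{Hom}_{\rea{\chi}{\g_+}}(M^0, N)$, and since $\g_1$ acts by zero on any $M^0$ inflated from $\rea{\chi}{\ev\g}$, the right-hand side equals $\mathrm{Hom}_{\rea{\chi}{\ev\g}}(M^0, N^{\g_1})$. The super-PBW theorem exhibits $\rea{\chi}{\g}$ as a free right $\rea{\chi}{\g_+}$-module of rank $2^{\dim\g_{-1}}$, so $\psi$ is automatically exact; by Proposition~\ref{prop:Kac module} applied to a simple typical $L^0$, each $\psi(L^0)$ is simple and hence lies in the typical block. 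The functor $\phi=(-)^{\g_1}$ is only left exact a priori, but it will turn out to be exact on the typical block as a formal consequence of the analysis of simples.

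The heart of the argument is the unit $L^0 \to \phi\psi(L^0)$ on a simple typical $L^0$. The $\Z$-grading of $\g$ endows $\psi(L^0) = \bigoplus_{k\ge 0}\La^k(\g_{-1}) \otimes L^0$ with an $\ev\g$-module grading, on which $\g_1$ acts by a homogeneous operator of degree $-1$. Consequently $\phi\psi(L^0)$ is graded, and its degree $0$ component is exactly $L^0$. The main claim is that for typical $L^0$ the operator
\[
\partial_k:\La^k(\g_{-1}) \otimes L^0 \longrightarrow \La^{k-1}(\g_{-1}) \otimes L^0
\]
induced by $\g_1$ is injective for every $k \ge 1$, forcing $\phi\psi(L^0)=L^0$. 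Dually, for a simple typical $\rea{\chi}{\g}$-module $N$, a standard highest-weight argument realizes $N$ as a quotient of some $\psi(L^0)$, and typicality together with Proposition~\ref{prop:Kac module} upgrades this to an isomorphism $N\cong\psi(L^0)$, so that $\phi(N)=L^0$ and the counit $\psi\phi(N)\to N$ is also an isomorphism.

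Once unit and counit are isomorphisms on simples, the exactness of $\psi$, combined with the five lemma and induction on composition length, extends both to natural isomorphisms on all of $\rea{\chi}{\ev\g}$-$\mathrm{mod}^{\mathrm{ty}}$ and $\rea{\chi}{\g}$-$\mathrm{mod}^{\mathrm{ty}}$; this last step also yields the exactness of $\phi$ on the typical block. The main obstacle is the injectivity of $\partial_k$ for typical $L^0$: this is precisely where the typicality hypothesis genuinely enters, and I expect it to require a Casimir- or Shapovalov-type computation adapted to prime characteristic, showing that the eigenvalue controlling the kernel of $\partial_k$ is forced to be nonzero by the typicality of the $\ev\n^+$-invariants of $L^0$. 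All other steps are formal consequences of the adjunction together with standard exactness manipulations.
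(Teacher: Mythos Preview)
Your overall architecture (adjunction, exactness of $\psi$, unit/counit on simples, d\'evissage) is exactly the paper's, but two points deserve correction.

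First, what you flag as the ``main obstacle'' is in fact cheap. Once you accept Proposition~\ref{prop:Kac module} (which you do), the module $\psi(L^0)$ is simple; Kac's argument (Proposition~\ref{prop:g_1-functor} in the paper) shows that $M^{\g_1}$ is simple whenever $M$ is. Since $L^0\subseteq\psi(L^0)^{\g_1}$ tautologically, equality is forced. No Casimir or Shapovalov computation is needed, and your $\partial_k$ are automatically injective.

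Second, and more seriously, the exactness of $\phi$ on the typical block is not a \emph{consequence} of the d\'evissage; it is a \emph{prerequisite} for it. With only left-exactness of $\phi$, the inductive step for the counit stalls: given $0\to A\to B\to C\to 0$ with $\varepsilon_A,\varepsilon_C$ isomorphisms, one obtains $\varepsilon_B$ injective and $\dim\psi\phi B\le\dim B$, but closing the gap requires precisely that $\phi B\to\phi C$ be surjective. The paper handles this beforehand (Corollary~\ref{cor:ME-simple-typical}): a typical simple $L\cong\psi(L^{\g_1})$ is visibly $\bigwedge\g_{-1}$-free, and by interchanging the roles of $\g_1$ and $\g_{-1}$ it is $\bigwedge\g_1$-free as well. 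Since $\bigwedge\g_1$ is local Frobenius, extensions of free modules are free, so every object of $\rea{\chi}{\g}\text{-mod}^{\text{ty}}$ is $\bigwedge\g_1$-free; on free modules $(-)^{\g_1}$ is exact. You could also recover freeness from your own $\partial_k$-injectivity via the socle--dimension criterion for freeness over a local Frobenius algebra, but you must do so \emph{before} the five-lemma step, not after.
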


The theorem allows us to ``transfer'' various results on the
representation theory of $\ev \g$ to that of $\g$. First, we show
for a semisimple $p$-character $\chi$ a simplicity criterion of baby
Verma modules, and, as a consequence, a semisimplicity criterion for
$\rea{\chi}{\g}$; this generalizes a result of Rudakov \cite{Rud}.
Secondly, for nilpotent $p$-characters, we relate the
representations of regular typical blocks of $\rea{\chi}{\g}$ to the
Springer fiber of the flag manifold of $\ev G$, via a recent proof
of a conjecture of Lusztig by Bezrukavnikov, Mirkovi\'{c} and
Rumynin \cite{BMR}. Thirdly, for $\chi=0$, we deduce from Cline,
Parshall and Scott \cite{CPS} a character formula for irreducible
modules whose weights are typical and $p$-regular in terms of baby
Verma modules, assuming the original Lusztig conjecture \cite{Lus}.

Connections to \cite{BMR,CPS} indicate that the modular
representation theory of Lie superalgebras in typical blocks is
already very rich; the ``non-typical'' representations are expected
to exhibit new purely super phenomena.

The paper is organized as follows. In Section~\ref{sec:basics}, we
recall some basic facts on the modular representations of Lie
superalgebras. Section~\ref{sec:ME} is devoted to establishing the
equivalence of typical blocks. Finally in Section~\ref{sec:app}, we
deduce various consequences from the equivalence.

\noindent {\bf Acknowledgments.} The author is very grateful to his
advisor, Weiqiang Wang, for suggesting the problem as well as
offering valuable advice throughout the years.

\bigskip

\noindent{\bf Added note.} After the paper is submitted, the author
came across Chaowen Zhang's paper \cite{Zh} on arXiv, in which he
(independently) obtained the simplicity criterion for baby Verma
modules with semisimple $p$-characters for basic classical Lie
superalgebras.

\section{Preliminaries}\label{sec:basics}
\subsection{} Throughout we work with an algebraically
closed field $K$ with characteristic $p>2$ as the ground field.
Unless otherwise specified, $\g=\ev \g \oplus \od \g$ will be
denoted one of the Type I basic classical Lie superalgebras, i.e.,
$\gl(m|n)$, $\mathfrak{sl}(m|n)$ or $\osp(2|2n)$ over $K$. Let
$U(\g)$ denote the universal enveloping algebra of $\g$ over $K$.

For a finite-dimensional associative superalgebra $A$, let $A$-mod
denote the category of finite-dimensional $A$-modules. A
superalgebra analogue of Schur's Lemma states that the endomorphism
ring of an irreducible module in $A$-mod is either one-dimensional
or two-dimensional (in the latter case it is isomorphic to a
Clifford algebra). An irreducible module is said to be {\em of type
$M$} if its endomorphism ring is one-dimensional and it is said to
be {\em of type $Q$} otherwise.

By vector spaces, derivations, subalgebras, ideals, modules, and
submodules etc. we mean in the super sense unless otherwise
specified.

\subsection{} Let $\h$ be a Cartan subalgebra of $\g$ (and of $\ev \g$).  Denote
by $\Delta$ the set of roots of $\g$ relative to $\h$ and by
$\Delta_{\bar{0}}$ (resp. $\Delta_{\bar{1}}$) the set of even (resp.
odd) roots.

Fix a Borel subalgebra $\mathfrak{b}_{\bar{0}}$ of $\ev \g$
containing $\h$ and let $\mathfrak{b}=\mathfrak{b}_{\bar{0}} \oplus
\mathfrak{b}_{\bar{1}}$ be a distinguished Borel subalgebra $\g$ in
the sense of \cite[Section~1.4]{Kac3}. We have a triangular
decomposition of $\g$:
\[
\g =\n^- \oplus \h \oplus \n^+, \quad \mathfrak{b}=\h \oplus \n^+,
\]
where $\n^+$ (resp. $\n^-$) is the nilradical (resp. opposite
nilradical) of $\mathfrak{b}$. Denote the set of positive even
(resp. odd) roots by $\Delta_{\bar{0}}^+$ (resp.
$\Delta_{\bar{1}}^+$) corresponding to the triangular decomposition,
and let $\Pi$ be the set of simple roots. One can easily extend a
Chevalley basis of $[\ev \g,\ev \g]$ to an integral basis
$\{X_\alpha, h_i |\; \alpha \in \Delta, 1 \leq i \leq r\}$ of $\g$.

Let $W$ be the Weyl group of $\ev \g$. Denote by $(.|.)$ a
$W$-invariant bilinear form on $\h^*$. The dot action of $W$ on
$\h^*$ is defined by
\[
w.\la :=w(\la + \rho)-\rho,
\]
for $\la \in \h^*$, where $\rho=\frac{1}{2}(\sum_{\alpha \in
\Delta_{\bar{0}}^+} \alpha - \sum_{\beta \in \Delta_{\bar{1}}^+}
\beta)$. Note that for $w\in W$, $w(\rho_1)=\rho_1$ and so
\[
w.\la=w(\la+\rho_0)-\rho_0,
\]
where $\rho_0 =\frac{1}{2} \sum_{\alpha \in \Delta_{\bar{0}}^+}
\alpha$.

Being type I, the Lie superalgebra $\g$ admits a natural
$\Z$-grading compatible with the Lie superalgebra structure as
follows:
\[
\g =\g_{-1} \oplus \ev \g \oplus \g_1, \quad \text{with } \od
\g=\g_{-1}\oplus \g_1,
\]
where $\g_{1}$ (resp. $\g_{-1}$) is spanned by the root vectors
$X_\alpha$ with $\alpha \in \Delta_{\bar{1}}^+$ (resp.
$-\Delta_{\bar{1}}^+$). Finally, put
$$\g_+ =\ev \g \oplus \g_1.$$

\subsection{}\label{sec:elerepn} Our Lie superalgebra $\g$ is restricted in the sense
that its even subalgebra $\ev \g$ is a restricted Lie algebra with
$p$th power map $(-)\pth { } :\ev \g \ra \ev \g$; and the odd part
$\od \g$ is a restricted module by the adjoint action of $\ev \g$.
We refer the reader to \cite{WZ1} for the basic results of
representation theory of restricted Lie superalgebras.

For each $\chi \in \ev \g^*$, the {\em reduced enveloping
superalgebra} of $\g$ with the $p$-character $\chi$ is by definition
the quotient of $U(\g)$ by the ideal $I_{\chi}$ generated by all
$x^p -\pth x -\chi(x)^p$ with $x \in \ev \g$. It has a basis
consisting of products
\begin{equation}\label{equ:basis}
\prod_{\alpha \in \Delta_{\bar{0}}}X_\alpha^{a_{\alpha}} \cdot
\prod_i h_i^{b_i}\cdot \prod_{\beta \in
\Delta_{\bar{1}}}X_\beta^{c_\beta},
\end{equation}
with $0 \leq a_\alpha, b_i<p$ and $c_\beta =0,1$. Each
$\rea{\chi}{\g}$ has dimension equal to $p^{\dim \ev \g} 2^{\dim \od
\g}$.
%
%

Let $\ev G$ be the adjoint algebraic group of $\ev \g$. As usual, we
may assume the $p$-character $\chi$ satisfying $\chi(\ev \n^+)=0$ by
replacing it by a suitable $\ev G$-conjugate if necessary. A
$p$-character $\chi \in \ev \g^*$ is {\em semisimple} if it is $\ev
G$-conjugate to some $\xi \in \ev \g^*$ with $\xi(\ev \n^+) =
\xi(\ev \n^-)=0$, and $\chi$ is {\em nilpotent} if it is $\ev
G$-conjugate to some $\eta \in \ev \g^*$ with $\eta(\ev \n^+) =
\eta(\h)=0$.

Let $\chi \in \ev \g^*$ be such that $\chi(\ev \n^+)=0$. Then all
irreducible $\rea{\chi}{\mathfrak{b}}$-modules are one dimensional
of the form $K_{\la}$, where $\n^+$ acts trivially and $\h$ acts via
multiplication by $\la$ for some $\la \in \La_{\chi}$, and
\[
\La_{\chi}=\{\mu \in \h^*|\; \mu(h)^p-\mu(h)=\chi(h)^p \text{ for
all }h \in \h \}.
\]

Inducing the one-dimensional $\rea{\chi}{\mathfrak{b}}$-modules
$K_\la$ to $\rea{\chi}{\g}$, we get the baby Verma modules
\[
Z_{\chi}(\la)=\rea{\chi}{\g}\otimes_{\rea{\chi}{\mathfrak{b}}}K_{\la}.
\]

Given any $\rea{\chi}{\ev \g}$-module $M^0$, we view it as a
$\rea{\chi}{\g_+}$-module by letting $\g_1$ act on it trivially.
Inducing to $\rea{\chi}{\g}$, we get a $\rea{\chi}{\g}$-module
\[
\psi(M^0)= \rea{\chi}{\g} \otimes_{\rea{\chi}{\g_+}} M^0.
\]
In particular, when $Z^{0}_\chi(\la):= \rea{\chi}{\ev \g}
\otimes_{\rea{\chi}{\mathfrak{b}_{\bar{0}}}}K_{\la}$ is the baby
Verma module of $\rea{\chi}{\ev\g}$ with respect to
$\ev{\mathfrak{b}}$ with weight $\la \in \La_\chi$, we have
\begin{equation}\label{equ:verma}
Z_\chi(\la) \cong \psi(Z^{0}_\chi(\la)) =\rea{\chi}{\g}
\otimes_{\rea{\chi}{\g_+}} Z^{0}_\chi(\la).
\end{equation}

A weight $\la \in \La_{\chi}$ is called {\em typical} (cf.
\cite{Kac3}) if $(\la+\rho|\beta)\neq 0$ for all $\beta \in
\Delta^+_{\bar{1}}$; otherwise $\la$ is {\em atypical}.

Let $w \in W$ and $\la \in \La_\chi$. Then, by the $W$-invariance of
the bilinear form $(.|.)$, we have
\[
(w.\la+\rho|\beta)=(w(\la+\rho)|\beta)=(\la+\rho|w^{-1}\beta),
\quad\text{for } \beta \in \Delta^+_{\bar{1}}.
\]
Since $\Delta^+_{\bar{1}}$ is $W$-invariant, we deduce that $\la \in
\La_\chi$ is typical if and only if $w.\la$ is typical for any $w
\in W$.

\section{An equivalence of categories}\label{sec:ME}

\subsection{} Set
\[
T_{\pm}= \prod_{\beta \in \Delta_{\bar{1}}^{\pm}} X_{\beta},
\]
where the product is taken in any arbitrarily fixed order.

Let $U_\Z$ be the $\Z$-lattice of the universal enveloping algebra
$U_{\mathbb{C}}$ of the complex counterpart of $\g$ spanned by
elements of the form (\ref{equ:basis}) with $a_\alpha, b_i \in
\Z_{\geq 0}$ and $c_\beta =0,1$. Then we have
\[
U(\g)\cong U_\Z \otimes_\Z K.
\]
Denote by $\h_\Z$, $\n_\Z$, etc. the counterparts of $\h$, $\n$,
etc. over the integers. By abuse of notation, we use $h_i$,
$X_{\alpha}$ with $i=1,\ldots,r$ and $\alpha \in \Delta$ for the
integral bases for all three algebras $U_\Z$, $U_{\mathbb{C}}$, and
$U(\g)$.

\begin{lemma}\label{lem:top-polynomial}
We have
\[
T_+ T_- - P_1 \in  U_\Z \n^+_\Z,
\]
for a unique polynomial $P_1$ in variables $\{h_i\}_{i=1}^r$.
\end{lemma}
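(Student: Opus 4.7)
The plan is to use the Poincar\'e--Birkhoff--Witt theorem for $U_\Z$ together with an $\h$-weight argument. The integral PBW basis~(\ref{equ:basis}) gives a $\Z$-module decomposition
\[
U_\Z \;\cong\; U_\Z(\n^-) \otimes_\Z U_\Z(\h) \otimes_\Z U_\Z(\n^+),
\]
and by writing things in PBW form one checks that the left ideal $U_\Z \n^+_\Z$ coincides with $U_\Z(\n^-) \otimes U_\Z(\h) \otimes (U_\Z(\n^+))_+$, where $(U_\Z(\n^+))_+$ denotes the augmentation ideal of $U_\Z(\n^+)$. Hence $U_\Z/U_\Z\n^+_\Z \cong U_\Z(\n^-)\otimes_\Z U_\Z(\h)$, and in particular the composite $U_\Z(\h)\hookrightarrow U_\Z \twoheadrightarrow U_\Z/U_\Z\n^+_\Z$ is injective, which will deliver the uniqueness of $P_1$ once existence is established.

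For existence, the key point is that $T_+T_-$ is a weight-zero vector for the adjoint action of $\h$: indeed $T_+$ has weight $\sum_{\beta \in \Delta^+_{\bar 1}}\beta$ and $T_-$ has weight $-\sum_{\beta \in \Delta^+_{\bar 1}}\beta$, so $[h, T_+T_-]=0$ for all $h \in \h$. Since the PBW decomposition respects the $\h$-weight grading (each basis monomial in~(\ref{equ:basis}) is a weight vector), the image of $T_+T_-$ in $U_\Z(\n^-)\otimes U_\Z(\h)$ must lie in its weight-zero part. But any PBW monomial in $U_\Z(\n^-)\otimes U_\Z(\h)$ carries weight equal to minus a nonnegative integer combination of positive roots (coming from the $U_\Z(\n^-)$-factor), which vanishes only when no negative-root factor is present. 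Consequently the weight-zero subspace of $U_\Z(\n^-)\otimes U_\Z(\h)$ is precisely $U_\Z(\h) = \Z[h_1,\ldots,h_r]$, and the image of $T_+T_-$ there is the desired polynomial $P_1$.

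I do not foresee any substantive obstacle: the argument is a tautological weight count layered on top of the standard integral PBW theorem for Lie superalgebras. The only mild verification required is the description of the left ideal $U_\Z\n^+_\Z$ recorded above, which follows routinely once elements are put in PBW form, since right-multiplication by $\n^+_\Z$ manifestly lands in $U_\Z(\n^-)\otimes U_\Z(\h)\otimes(U_\Z(\n^+))_+$.
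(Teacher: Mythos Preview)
Your proof is correct and follows essentially the same approach as the paper's: both invoke the integral PBW decomposition $U_\Z = U_\Z(\n^-_\Z)\,U_\Z(\h_\Z)\,U_\Z(\n^+_\Z)$ and the root-lattice grading, then observe that $T_+T_-$ has degree zero so its PBW expansion modulo $U_\Z\n^+_\Z$ lies entirely in $U_\Z(\h_\Z)$. You spell out a bit more than the paper does (the explicit description of the left ideal $U_\Z\n^+_\Z$ and the injectivity giving uniqueness), but the underlying argument is identical.
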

\begin{proof}
The universal enveloping algebra $U_\Z$ has a PBW basis which is
compatible with the product decomposition
$$U_\Z=U_\Z(\n_\Z^-)U_\Z(\h_\Z)U_\Z(\n_\Z^+),$$
where $U_\Z(\h_\Z)$ and $U_\Z(\n_\Z^{\pm})$ are the $\Z$-subalgebras
of $U_\Z$ spanned by products $\prod h_i^{b_i}$ with $b_i \in
\Z_{\geq 0}$ and $\prod_{\alpha \in \pm \Delta^{+}_{\bar 0}}
X_\alpha^{a_\alpha} \cdot \prod_{\beta \in  \pm \Delta^{+}_{\bar 1}}
X_\beta^{c_\beta}$ with $a_\alpha \in \Z_{\geq 0}$, $c_\beta =0,1$
respectively.

There is a natural grading on $U_\Z$ by the root lattice of $\g_\Z$
given by declaring $\text{deg}\, X_\alpha = \alpha$ for $\alpha \in
\Delta$ and $\text{deg}\,h_i=0$ for $i =1,\ldots, r$. Then the lemma
follows from writing the degree $0$ vector $T_+T_-$ in terms of the
above PBW basis.
\end{proof}

In \cite[Proposition~2.9]{Kac3}, Kac computed the polynomial $P_1$
for $U_{\mathbb C}$ over the complex numbers. By
Lemma~\ref{lem:top-polynomial}, we conclude that Kac's formula
\begin{equation}\label{equ:top-polynomial}
P_1(\la)= \pm \prod_{\beta \in \Delta^+_{\bar{1}}} (\la
+\rho|\,\beta), \quad \la \in \h^*,
\end{equation}
remains valid in our setting (of characteristic $p$).

Let $\chi \in \ev \g^*$ be such that $\chi(\ev \n^+)=0$. Then for
any simple $\rea{\chi}{\ev\g}$-module $L^0$, we have $(L^0)^{\ev
\n^+} \neq 0$. It may turn out that $(L^0)^{\ev \n^+}$ is not
one-dimensional, say we have $0 \neq v, u \in (L^0)^{\ev \n^+}$ with
$h.v=\la(h)v$ and $h.u=\mu(h)u$ for all $h \in \h$ and some $\la,
\mu \in \La_\chi$. Then $L^0$ is a quotient of both $Z^0_\chi(\la)$
and $Z^0_\chi(\mu)$. It follows that $U(\g)^{\ev G}$ acts on the two
baby Verma modules via the same central character. It is known (cf.
\cite[Corollary~9.4]{Jan1}) that $\la \in W.\mu$. Hence the weights
in $(L^0)^{\ev \n^+}$ are either all typical or are all atypical. A
simple $\rea{\chi}{\ev \g}$-module $L^0$ is called {\em typical} if
any of weights in $(L^0)^{\ev \n^+}$ is typical; otherwise, $L^0$ is
called {\em atypical}.

Now let $L^0$ be a simple $\rea{\chi}{\ev \g}$-module, and let $v$
be a nonzero vector in $(L^0)^{\ev \n^+}$ with $h.v=\la(h)v$ for all
$h \in \h$ and some $\la \in \La_\chi$. By
Lemma~\ref{lem:top-polynomial}, $T_+T_-$ acts on $v=1 \otimes v \in
\psi(L^0)$ by multiplication by $P_1(\la)$. Then by
(\ref{equ:top-polynomial}), we have $L^0$ is typical if and only if
$P_1(\la) \neq 0$.

\begin{lemma}\label{lem:commute}
The vector spaces $KT_+$, $KT_-$, and $KT_+T_-$ are one-dimensional
representations of $\rea{\chi}{\ev\g}$ under the adjoint action.
Among them, $KT_+T_-$ is the trivial one-dimensional representation.
\end{lemma}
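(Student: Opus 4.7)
The plan is to reduce the lemma to a trace identity. The $\Z$-grading forces $[\g_1,\g_1]\subseteq\g_2=0$, so $\g_1$ is an abelian super subalgebra and $U(\g_1)\cong\La(\g_1)$ as superalgebras. Consequently $T_+$ spans the top exterior power $\La^{\max}\g_1$, a one-dimensional subspace stable under $\text{ad}(\ev\g)$ (the adjoint action preserves the root-lattice grading of $U(\g)$, hence preserves $U(\g_1)$ and each of its homogeneous pieces). Since $x\in\ev\g$ is even, $\text{ad}(x)$ acts on $\La(\g_1)$ by an even derivation, so on the one-dimensional top component it acts by the trace:
\[ [x,T_+] \;=\; \mathrm{tr}\bigl(\text{ad}(x)|_{\g_1}\bigr)\,T_+. \]
The symmetric argument for $\g_{-1}$ yields $[x,T_-] = \mathrm{tr}(\text{ad}(x)|_{\g_{-1}})\,T_-$. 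This already shows $KT_+$ and $KT_-$ are one-dimensional $\ev\g$-modules.

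For $KT_+T_-$, I apply the Leibniz rule (with no sign since $x$ is even) to get
\[ [x,T_+T_-] \;=\; [x,T_+]T_- + T_+[x,T_-] \;=\; \mathrm{tr}\bigl(\text{ad}(x)|_{\od\g}\bigr)\,T_+T_-, \]
so the triviality of $KT_+T_-$ reduces to the identity $\mathrm{tr}(\text{ad}(x)|_{\od\g})=0$ for every $x\in\ev\g$.

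To verify this trace identity I would decompose. For $x=h\in\h$, the $\h$-weights of $\g_1$ are $\Delta^+_{\bar 1}$ and those of $\g_{-1}$ are $-\Delta^+_{\bar 1}$, so the two contributions cancel. For $x\in[\ev\g,\ev\g]$, the trace of $\text{ad}(x)$ on any finite-dimensional $\ev\g$-module vanishes as the trace of a commutator. Since $\ev\g$ is reductive with Cartan $\h$ containing its center, $\ev\g=\h+[\ev\g,\ev\g]$, covering all $x$. Finally, to descend from $U(\g)$ to $U_\chi(\g)$, one uses that the adjoint action of $\ev\g$ on $U_\chi(\g)$ satisfies $\text{ad}(x)^p=\text{ad}(\pth x)$ (since the $\chi(x)^p$ scalar commutes with everything), so the action factors through $U_\chi(\ev\g)$. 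The only substantive ingredient I foresee is the trace-vanishing identity $\mathrm{tr}(\text{ad}(x)|_{\od\g})=0$; everything else is formal bookkeeping.
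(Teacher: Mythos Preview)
Your proof is correct and follows essentially the same route as the paper: both arguments identify $T_\pm$ with the top exterior powers of $\g_{\pm 1}$, use that $\ev\g=\h+[\ev\g,\ev\g]$, and then check triviality on $\h$ (weight zero) and on $[\ev\g,\ev\g]$ (trace of a commutator vanishes). You make the trace formula $[x,T_\pm]=\mathrm{tr}(\mathrm{ad}(x)|_{\g_{\pm1}})T_\pm$ explicit and handle $T_+T_-$ via Leibniz, whereas the paper treats $T_+T_-$ directly as the top wedge of $\bigwedge\od\g$; these are cosmetic differences. One small correction: your identity $\mathrm{ad}(x)^p=\mathrm{ad}(x^{[p]})$ on $U_\chi(\g)$ shows the adjoint action factors through $U_0(\ev\g)$, not $U_\chi(\ev\g)$---but this is an imprecision already present in the paper's statement, and only the $\ev\g$-module assertion is actually used downstream.
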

\begin{proof}
The first assertion follows from $[\ev\g, \g_{\pm 1}] \subset
\g_{\pm 1}$ and $[\ev \g, \od \g] \subset \od \g$, and the fact that
$T_\pm$ and $T_+T_-$ lie in the top wedges of $\bigwedge \g_{\pm 1}$
and $\bigwedge \od \g$ respectively, where $\bigwedge \g_1$, etc. is
the exterior algebra of $\g_1$, etc..

To show that $KT_+T_-$ is a trivial $\rea{\chi}{\ev\g}$-module, note
first that elements in the semisimple part $[\ev \g,\ev\g]$ of $\ev
\g$ commute with $T_+T_-$. Elements in the subalgebra $\h$ also
commute with $T_+T_-$ since $T_+T_-$ has weight zero. Then any
element in $\ev \g$ commutes with $T_+T_-$. The second assertion is
proved.
\end{proof}

\begin{lemma}\label{lem:Kac module-low vector}
Let $L^0$ be an irreducible $\rea{\chi}{\ev \g}$-module. Then any
nonzero (graded or non-graded) $\rea{\chi}{\g}$-submodule $S$ of
$\psi(L^0)$ contains $T_- L^0$.
\end{lemma}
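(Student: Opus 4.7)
The plan is: for any nonzero $s \in S$, I will produce, by left-multiplication by a suitable element of $U(\g_{-1}) \subseteq \rea{\chi}{\g}$, a nonzero vector inside $T_- L^0$, and then use $\ev\g$-irreducibility of $T_- L^0$ to deduce $T_- L^0 \subseteq S$. For the setup, I note that since $\g$ is of type I, $[\g_{-1},\g_{-1}] \subseteq \g_{-2} = 0$, so $U(\g_{-1})$ is an exterior algebra on $\g_{-1}$; by PBW, $\psi(L^0) \cong U(\g_{-1}) \otimes L^0$ as a vector space, equipped with a natural $\Z_{\geq 0}$-grading by $\g_{-1}$-degree. Writing $Y_\alpha := X_{-\alpha}$ and $T_I := \prod_{\alpha \in I} Y_\alpha$ (in a fixed order) for $I \subseteq \Delta_{\bar{1}}^+$, every element of $\psi(L^0)$ admits a unique expansion $\sum_I T_I \otimes v_I$ with $v_I \in L^0$; the top graded piece, of degree $d := |\Delta_{\bar{1}}^+|$, is exactly $T_- L^0$.

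Given $0 \neq s \in S$ expanded as $\sum_I T_I \otimes v_I$, I would let $j_0 := \min\{|I| : v_I \neq 0\}$ and fix some $I_0$ with $|I_0| = j_0$ and $v_{I_0} \neq 0$. I would then act on $s$ by left-multiplication with $T_{\Delta_{\bar{1}}^+ \setminus I_0} \in U(\g_{-1})$, which preserves $S$. In the exterior algebra on $\g_{-1}$ the product $T_{\Delta_{\bar{1}}^+ \setminus I_0} \cdot T_I$ vanishes whenever the two index sets are not disjoint or their union exceeds $d$ elements; combined with $|I| \geq j_0 = |I_0|$, this forces $I = I_0$, in which case the product equals $\pm T_-$. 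Therefore $T_{\Delta_{\bar{1}}^+ \setminus I_0} \cdot s = \pm T_- \otimes v_{I_0}$, a nonzero element of $T_- L^0$ lying in $S$.

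By Lemma~\ref{lem:commute}, $K T_-$ is a one-dimensional $\ev\g$-module under the adjoint action, so the linear bijection $L^0 \to T_- L^0$, $v \mapsto T_- \otimes v$, intertwines the $\ev\g$-actions up to a one-dimensional character twist; in particular $T_- L^0$ is irreducible over $\ev\g$. The $\ev\g$-stable subspace $S \cap T_- L^0$ is nonzero by the previous step, so it must coincide with all of $T_- L^0$, yielding $T_- L^0 \subseteq S$. The only mildly delicate point is the exterior-algebra computation where signs from the anticommutation of the $Y_\alpha$'s appear, but since only the nonvanishing of the final product is required these signs are harmless; no bracket with $\ev\g$ or $\g_1$ ever enters, as the whole computation takes place inside $U(\g_{-1})$.
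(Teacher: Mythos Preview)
Your proof is correct and follows essentially the same approach as the paper's: both use the PBW identification $\psi(L^0)\cong \rea{\chi}{\g_{-1}}\otimes L^0\cong \bigwedge\g_{-1}\otimes L^0$, multiply an arbitrary nonzero element of $S$ by a suitable element of $\rea{\chi}{\g_{-1}}$ to land in $T_-L^0$, and then invoke Lemma~\ref{lem:commute} together with the irreducibility of $L^0$ to conclude $T_-L^0\subseteq S$. Your version simply makes explicit the ``suitable element'' (namely $T_{\Delta_{\bar 1}^+\setminus I_0}$) and spells out the exterior-algebra vanishing that the paper leaves implicit.
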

\begin{proof}
Note that, as vector spaces, $\psi(L^0)\cong
\rea{\chi}{\g_{-1}}\otimes L^0$. Starting from any vector $u \in S$,
we can then apply a suitable vector in $\rea{\chi}{\g_{-1}}$ to get
a vector of the form $T_-v$ for some $0 \neq v \in L^0$. The lemma
then follows from Lemma~\ref{lem:commute}, since
$\rea{\chi}{\ev\g}T_-v=T_-\rea{\chi}{\ev\g}v=T_-L^0$.
\end{proof}

\begin{corollary}\label{cor:typeM}
Let $L^0$ be a simple $\rea{\chi}{\ev\g}$-module. If the induced
module $\psi(L^0)$ is irreducible then it is of type $M$.
\end{corollary}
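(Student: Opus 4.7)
The plan is to prove that the super-algebra of $\rea{\chi}{\g}$-endomorphisms of $\psi(L^0)$ is one-dimensional. My strategy is to identify a small, well-understood subspace that every endomorphism must preserve, reduce to a Schur-type argument there, and then propagate back to $\psi(L^0)$.

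First I would pin down the space of $\g_{-1}$-invariants. By PBW, $\rea{\chi}{\g}\cong\rea{\chi}{\g_{-1}}\otimes\rea{\chi}{\g_+}$ as a right $\rea{\chi}{\g_+}$-module, so $\psi(L^0)\cong\rea{\chi}{\g_{-1}}\otimes L^0$ as $\rea{\chi}{\g_{-1}}$-modules, with $\g_{-1}$ acting only on the first factor. Since $\g_{-1}$ is abelian and odd, $\rea{\chi}{\g_{-1}}$ is the exterior algebra $\bigwedge\g_{-1}$, and a direct computation shows that the subspace annihilated by left multiplication by every element of $\g_{-1}$ equals the top wedge $KT_-$. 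Consequently $(\psi(L^0))^{\g_{-1}}=T_-L^0$.

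Next I would let $f$ be any (even or odd) $\rea{\chi}{\g}$-endomorphism of $\psi(L^0)$. Because $f$ super-commutes with the $\g_{-1}$-action, it preserves $(\psi(L^0))^{\g_{-1}}=T_-L^0$. By Lemma~\ref{lem:commute}, $T_-L^0$ is an $\ev\g$-submodule, and via the map $v\mapsto T_-v$ it is isomorphic to $L^0$ twisted by the one-dimensional $\ev\g$-character through which $\ev\g$ acts on $KT_-$; in particular $T_-L^0$ is irreducible as an $\ev\g$-module and, after fixing $L^0$ as purely even, is concentrated in a single $\Z/2$-parity. Schur's lemma for the ordinary Lie algebra $\ev\g$ therefore forces $f|_{T_-L^0}=c\cdot\mathrm{id}$ for some $c\in K$ if $f$ is even, and $f|_{T_-L^0}=0$ if $f$ is odd. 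Finally, since $\psi(L^0)$ is irreducible, Lemma~\ref{lem:Kac module-low vector} implies $T_-L^0$ generates $\psi(L^0)$ as a $\rea{\chi}{\g}$-module, so the $\rea{\chi}{\g}$-endomorphism $f-c\cdot\mathrm{id}$ (with $c=0$ in the odd case) vanishes everywhere and $f=c\cdot\mathrm{id}$.

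The step I expect to require the most care is the identification $(\psi(L^0))^{\g_{-1}}=T_-L^0$, since it rests on the PBW-type decomposition of $\rea{\chi}{\g}$ together with the specific structure of the exterior algebra $\bigwedge\g_{-1}$; once this is in hand, the type-$M$ conclusion follows from Schur combined with Lemma~\ref{lem:Kac module-low vector} essentially formally.
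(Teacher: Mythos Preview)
Your proof is correct. The paper's proof, by contrast, is the single line ``Follows immediately from Lemma~\ref{lem:Kac module-low vector},'' which is implicitly invoking the non-graded clause of that lemma: a simple supermodule of type~$Q$ always admits a nontrivial decomposition $N_1\oplus N_2$ into \emph{ungraded} $\rea{\chi}{\g}$-submodules (the eigenspaces of $\tau=\theta\circ P$, where $\theta$ is the odd automorphism and $P$ the parity operator), and Lemma~\ref{lem:Kac module-low vector} forces $T_-L^0\subseteq N_1\cap N_2=0$, a contradiction.

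Your route is different in that you never pass through ungraded submodules or the structural fact about type-$Q$ modules. Instead you identify $T_-L^0$ as $(\psi(L^0))^{\g_{-1}}$, observe that every homogeneous endomorphism preserves this space, and kill odd endomorphisms by the single-parity concentration of $T_-L^0$. This is more elementary and self-contained; the trade-off is that you redo by hand part of what the proof of Lemma~\ref{lem:Kac module-low vector} already contains (the PBW description of $\psi(L^0)$ and the role of $T_-$). One small simplification: your final appeal to Lemma~\ref{lem:Kac module-low vector} is unnecessary, since graded irreducibility alone guarantees that the graded submodule generated by the nonzero homogeneous subspace $T_-L^0$ is all of $\psi(L^0)$, so $\ker(f-c\cdot\mathrm{id})$ must be everything.
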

\begin{proof}
Follows immediately from Lemma~\ref{lem:Kac module-low vector}.
\end{proof}

\begin{proposition}\label{prop:Kac module}
Let $L^0$ be an irreducible $\rea{\chi}{\ev \g}$-module. Then the
induced $\rea{\chi}{\g}$-module $\psi(L^0)$ is irreducible if and
only if $L^0$ is typical.
\end{proposition}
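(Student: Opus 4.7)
The plan rests on the identity
\[
T_+T_- \cdot (1 \otimes v) = P_1(\la)(1 \otimes v) \quad \text{in } \psi(L^0),
\]
valid for any $v \in (L^0)^{\ev \n^+}$ of weight $\la \in \La_\chi$. To derive it I would apply Lemma~\ref{lem:top-polynomial} to write $T_+T_- = P_1 + N$ with $N \in U(\g)\n^+$: the polynomial $P_1 \in U(\h)$ acts on $v$ by the scalar $P_1(\la)$, while each PBW summand of $N$ ends with a factor from $\n^+ = \ev \n^+ \oplus \g_1$. Such a tail annihilates $1 \otimes v \in \psi(L^0)$, since the $\ev \n^+$ part kills the highest-weight vector $v \in L^0$ and the $\g_1$ part acts trivially on $L^0$ by construction of $\psi$. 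By formula~(\ref{equ:top-polynomial}), $P_1(\la) \neq 0$ precisely when $L^0$ is typical, so both directions of the proposition will hinge on this single scalar.

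For ``typical $\Rightarrow$ irreducible'', let $S \subseteq \psi(L^0)$ be any nonzero submodule. Lemma~\ref{lem:Kac module-low vector} forces $T_-L^0 \subseteq S$, so $T_-v \in S$; applying $T_+$ yields $P_1(\la)(1 \otimes v) \in S$, which is nonzero by typicality, hence $1 \otimes v \in S$. Simplicity of $L^0$ upgrades this to $1 \otimes L^0 = \rea{\chi}{\ev \g}(1 \otimes v) \subseteq S$, and then $\psi(L^0) = \rea{\chi}{\g_{-1}}(1 \otimes L^0) \subseteq S$, so $S = \psi(L^0)$.

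For ``atypical $\Rightarrow$ reducible'' I would exhibit the submodule $M = \rea{\chi}{\g}\cdot T_-v$, which is clearly nonzero, and show $1 \otimes v \notin M$. Using the PBW decomposition $\psi(L^0) \cong \bigoplus_{k=0}^{d} \bigwedge^k \g_{-1} \otimes L^0$ as a vector space (with $d = \dim \g_{-1}$), let $\pi_0$ be the projection onto the degree-zero summand $1 \otimes L^0$. For any $X \in \rea{\chi}{\g}$ in PBW form $X = X_{(-)}X_{(0)}X_{(+)}$ with $X_{(\pm)} \in U(\g_{\pm 1})$ and $X_{(0)} \in U(\ev \g)$, I would track the $\g_{-1}$-wedge degree of $X \cdot T_-v$: each factor of $\g_1$ strictly lowers the degree by one (commutators $[\g_1,\g_{-1}] \subseteq \ev \g$ consume one $\g_{-1}$ while pure $\g_1$-tails die against $v$), $\ev \g$ preserves the degree, and $X_{(-)}$ of wedge degree $m$ raises it by $m$ or gives zero if saturated. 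Starting from $T_-v$ in degree $d$, the only possible nonzero degree-zero contribution requires $X_{(+)} \in KT_+$ together with $X_{(-)} \in K$, in which case $X \cdot T_-v$ is proportional to $X_{(0)} \cdot T_+T_-v = X_{(0)}\cdot P_1(\la)(1 \otimes v) = 0$ by atypicality. Hence $\pi_0(M) = 0$ while $\pi_0(1 \otimes v) \neq 0$, so $M$ is proper.

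The main obstacle is the filtration bookkeeping in the last step: one must verify cleanly that $\g_1$ lowers the $\g_{-1}$-wedge degree by exactly one on each graded piece (not spreading into multiple degrees), that $\ev \g$ preserves it, and that $U(\g_{-1})$ raises it by wedge-order or annihilates when saturated, so that the composite action of $X_{(-)}X_{(0)}X_{(+)}$ lands in a predictable single wedge degree. Once this is in place, both directions of the proposition follow uniformly from the single identity $T_+T_-v = P_1(\la)(1 \otimes v)$.
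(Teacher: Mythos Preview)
Your argument is correct and matches the paper's approach; the ``obstacle'' you flag dissolves once you note that the $\mathbb{Z}$-grading $\g=\g_{-1}\oplus\ev\g\oplus\g_1$ makes $\psi(L^0)$ a genuinely $\mathbb{Z}$-graded $\g$-module (not merely filtered), with $\bigwedge^k\g_{-1}\otimes L^0$ sitting in degree $-k$, so $\g_1,\ev\g,\g_{-1}$ shift the wedge degree by exactly $-1,0,+1$ as you need. In the atypical case the paper packages the same computation slightly differently---using Lemma~\ref{lem:commute} it observes that multiplication by $T_+T_-$ is a $\rea{\chi}{\ev\g}$-module map $L^0\to L^0$ with $v$ in its kernel, hence zero by simplicity of $L^0$, so $T_+T_-L^0=0$ and the submodule $\rea{\chi}{\g}T_-L^0$ misses $1\otimes L^0$---but this is your degree observation in different clothing.
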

\begin{proof}
Choose a nonzero weight vector $v \in (L^0)^{\ev\n^+}$ with $h.v =
\la(h)v$ for all $h \in \h$ and some $\la \in \La_\chi$. We claim
that:

{\bf Claim:} $\psi(L^0)$ is irreducible if and only if $v \in
T_+T_-L^0$.

According to Lemma~\ref{lem:Kac module-low vector}, it is easy to
see that the $\rea{\chi}{\g}$-module $\psi(L^0)$ is irreducible if
and only if $v \in \rea{\chi}{\g}T_-L^0$.

Now any element $u \in \rea{\chi}{\g}$ can be written as $u =u_1
\cdot u_0 \cdot u_{-1}$ with $u_{\pm 1} \in \rea{\chi}{\g_{\pm 1}}$
and $u_0 \in \rea{\chi}{\ev\g}$. Since $\rea{\chi}{\g_{-1}}\g_{-1}$
kills $T_-L^0$ and $\rea{\chi}{\ev\g}$ leaves $T_-L^0$ stable
(Lemma~\ref{lem:commute}), one sees that
\[
\rea{\chi}{\g}T_-L^0 = \rea{\chi}{\g_1}T_-L^0.
\]
Thus the $\rea{\chi}{\g}$-module $\psi(L^0)$ is irreducible if and
only if $v \in \rea{\chi}{\g_1}T_-L^0$.

Since $\rea{\chi}{\g_1}$ is a direct sum of $KT_+$ and the subspace
spanned by products of positive odd root vectors of length shorter
than the monomial $T_+$, we observe further that
\[
\rea{\chi}{\g_1}T_-L^0 \cap (1 \otimes L^0) = T_+T_-L^0 \cap (1
\otimes L^0).
\]
Thus $v\in \rea{\chi}{\g_1}T_-L^0$ if and only $v \in T_+T_-L^0$.
The Claim is proved.

Now if $L^0$ is typical, then $T_+T_-.v=P_1(\la)v \neq 0$. Thus $v
\in T_+T_-L^0$ and hence $\psi(L^0)$ is irreducible by the Claim.

Conversely, if $L^0$ is atypical then $T_+T_-.v=P_1(\la)v=0$.
Consider the subspace $T_+T_-L^0 \subset \psi(L^0)$. By
Lemma~\ref{lem:commute}, $T_+T_-L^0$ is a $\rea{\chi}{\ev
\g}$-submodule of $\psi(L^0)$. View $L^0 = 1 \otimes L^0$ also as a
$\rea{\chi}{\ev \g}$-submodule of $\psi(L^0)$. Then multiplying
elements of $L^0$ by $T_+T_-$ will give us a surjective map
\[
m: L^0 \ra T_+T_-L^0.
\]
The map $m$ is actually a $\rea{\chi}{\ev\g}$-module homomorphism
since $\rea{\chi}{\ev \g}$ commutes with $T_+T_-$
(Lemma~\ref{lem:commute}). It has a nontrivial kernel containing
$v$. Since $L^0$ is a simple $\rea{\chi}{\ev\g}$-module, $T_+T_-L^0$
has to be zero. By using the above Claim again, we conclude that
$\psi(L^0)$ is reducible.
\end{proof}


\subsection{}
The following proposition is an analogue of
\cite[Proposition~5.2.5(a)]{Kac1} in characteristic $p$.

\begin{proposition}\label{prop:g_1-functor}
Let $\chi \in \ev \g^*$ be an arbitrary $p$-character. Let $M$ be a
$\rea{\chi}{\g}$-module. Set $M^{\g_1}=\{m \in M|\; \g_1. m=0\}$. If
$M$ is irreducible, then $M^{\g_1}$ is irreducible as a
$\rea{\chi}{\ev \g}$-module. In particular, if $L^0$ is a typical
$\rea{\chi}{\ev\g}$-module, then
\[
L^0 \cong \psi(L^0)^{\g_1}.
\]
\end{proposition}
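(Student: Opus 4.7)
The plan is to establish three things in sequence: $M^{\g_1}$ is nonzero, $\rea{\chi}{\ev\g}$-stable, and irreducible over $\rea{\chi}{\ev\g}$. Nonvanishing follows because, for type I, every odd root $\beta$ is isotropic, so $X_\beta^2=0$ in $U(\g)$; together with $\g_1$ being abelian, this makes $\rea{\chi}{\g_1}\cong\bigwedge\g_1$ a local finite-dimensional algebra whose augmentation ideal acts nilpotently on any module, guaranteeing nontrivial fixed vectors. Stability under $\ev\g$ is immediate from $[\ev\g,\g_1]\subset\g_1$.

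For the irreducibility, let $V\subseteq M^{\g_1}$ be any simple $\rea{\chi}{\ev\g}$-submodule. Since $\g_1$ is an ideal in $\g_+$ and $V\subseteq M^{\g_1}$, I view $V$ as a $\rea{\chi}{\g_+}$-module with trivial $\g_1$-action. The $\g_+$-inclusion $V\hookrightarrow M$ lifts via the adjunction $\Hom_\g(\psi(V),M)\cong\Hom_{\g_+}(V,M)$ to a nonzero $\g$-map $\pi\colon\psi(V)\to M$, which is surjective by the simplicity of $M$.

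To conclude $V=M^{\g_1}$, I would exploit the PBW-based grading $\psi(V)=\bigoplus_{k=0}^N\bigwedge^k\g_{-1}\otimes V$. The super Leibniz rule, combined with the triviality of the $\g_1$-action on $V$, shows that the $\g_1$-action strictly lowers this grading, so $\psi(V)^{\g_1}$ decomposes accordingly with degree-$0$ component $1\otimes V$. The top degree $\bigwedge^N\g_{-1}\otimes V=KT_-\otimes V$ plays the decisive role: applying $T_+$ to a candidate invariant $T_-\otimes v$ shows that $\g_1$-invariance forces $T_+T_-\otimes v=0$ in $\psi(V)$, which by Lemma~\ref{lem:top-polynomial} reduces (on highest-weight vectors) to $P_1(\la)\cdot v=0$. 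For typical $V$, $P_1(\la)\neq 0$ eliminates top-degree invariants; an inductive descent (examining each lower degree against $\ev\g$-highest-weight lines) yields $\psi(V)^{\g_1}=1\otimes V$, and combined with $\pi$ being an isomorphism by Proposition~\ref{prop:Kac module}, gives $M^{\g_1}=V$. The principal obstacle will be the atypical case, where extra invariants persist in $\psi(V)^{\g_1}$ and $\pi$ has nonzero kernel; one must verify that the ``unwanted'' higher-degree $\g_1$-invariants all lie in $\ker\pi$, so that the induced map on invariants still has image precisely $V$ inside $M^{\g_1}$.

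The ``in particular'' statement is then immediate: for $L^0$ typical, Proposition~\ref{prop:Kac module} gives $\psi(L^0)$ simple, so the irreducibility statement applied to $M=\psi(L^0)$ produces $\psi(L^0)^{\g_1}$ simple over $\rea{\chi}{\ev\g}$. The natural inclusion $L^0\cong 1\otimes L^0\hookrightarrow\psi(L^0)^{\g_1}$ is nonzero, hence an isomorphism by simplicity.
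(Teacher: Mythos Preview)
Your handling of nonvanishing, $\ev\g$-stability, and the ``in particular'' clause is fine, and the typical branch can be made rigorous along your lines (once $\pi$ is an isomorphism, $\psi(V)=\rea{\chi}{\g_1}\,T_-V$ is free over $\bigwedge\g_1$, so $\psi(V)^{\g_1}=T_+T_-V=V$ directly, without any inductive descent).

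The atypical branch, however, has a genuine gap, and your stated plan would not close it even if executed. Verifying that the higher-degree $\g_1$-invariants of $\psi(V)$ lie in $\ker\pi$ only gives $\pi\bigl(\psi(V)^{\g_1}\bigr)=V\subset M^{\g_1}$; it does \emph{not} give $M^{\g_1}\subset V$. For that you would also need $\pi$ to be surjective on $\g_1$-invariants, i.e.\ every $m\in M^{\g_1}$ to lift to some $\tilde m\in\psi(V)$ with $\g_1\tilde m=0$. But $(-)^{\g_1}$ is only left exact, and in the atypical case $\psi(V)$ is not $\bigwedge\g_1$-free (already for $\gl(1|1)$ one has $e$ acting by zero on the whole of $\psi(V)$), so there is no reason for this: a hypothetical $m\in M^{\g_1}\setminus V$ lifts to some $\tilde m$ with $\g_1\tilde m\subset\ker\pi$, possibly nonzero, and nothing in your outline excludes it. Note also that your case split leans on Proposition~\ref{prop:Kac module}, which is stated under $\chi(\ev\n^+)=0$, whereas the present proposition is asserted for arbitrary $\chi$.

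The paper defers to \cite[Proposition~5.2.5(a)]{Kac1}, whose argument is uniform and does not invoke Proposition~\ref{prop:Kac module} or any typical/atypical split. One takes \emph{any} nonzero $\ev\g$-submodule $V\subset M^{\g_1}$, so that $M=\rea{\chi}{\g_{-1}}V$, and uses the grading by the central element $z\in\ev\g$ with $[z,\g_{\pm 1}]=\pm\g_{\pm 1}$: the $z$-eigenvalue carried by $V$ is the top one occurring in $M$, and a $\g_1$-invariant sitting in a strictly lower eigenspace would, by the same generation argument applied to it, force that lower eigenvalue to be the top one as well---a contradiction. This is what you should reproduce rather than the case analysis.
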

\begin{proof}
The proof is the same as that of \cite[Proposition~5.2.5(a)]{Kac1},
and will be omitted here.
\end{proof}

We call an irreducible $\rea{\chi}{\g}$-module $L$ {\em typical}
(resp. {\em atypical}) if $L^{\g_1}$ is typical (resp. atypical).
This is equivalent to saying that all weights in $L^{\n^+}$ are
typical (resp. atypical).


\begin{corollary}\label{cor:ME-simple-typical}
Let $\chi \in \ev\g^*$ be such that $\chi(\ev \n^+)=0$. Let $L$ be a
typical irreducible $\rea{\chi}{\g}$-module. Then $L$ is
$\rea{\chi}{\g_1}$-free.
\end{corollary}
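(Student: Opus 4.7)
The plan is to identify $L$ with the induced module $\psi(L^{\g_1})$ using Propositions~\ref{prop:Kac module} and \ref{prop:g_1-functor}, and then to convert a PBW dimension count into freeness by invoking the self-injective (Frobenius) structure of the exterior algebra $\rea{\chi}{\g_1}=\bigwedge\g_1$.

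First I would set $L^0:=L^{\g_1}$. By Proposition~\ref{prop:g_1-functor} the module $L^0$ is an irreducible $\rea{\chi}{\ev\g}$-module, and it is typical by hypothesis. The inclusion $L^0\hookrightarrow L$ is $\rea{\chi}{\g_+}$-linear, since $\g_1$ annihilates $L^0$ by definition and $\ev\g$ preserves $L^0$ because $[\ev\g,\g_1]\subseteq\g_1$. Frobenius reciprocity then yields a nonzero $\rea{\chi}{\g}$-module homomorphism $\psi(L^0)\to L$, $u\otimes v\mapsto u\cdot v$. Proposition~\ref{prop:Kac module} forces $\psi(L^0)$ to be irreducible, so by Schur's Lemma this map is an isomorphism.

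Next I would extract the dimension of $L$ from PBW. The decomposition $\g=\g_{-1}\oplus\g_+$ gives $\rea{\chi}{\g}\cong\rea{\chi}{\g_{-1}}\otimes\rea{\chi}{\g_+}$ as right $\rea{\chi}{\g_+}$-modules, and tensoring over $\rea{\chi}{\g_+}$ yields $L\cong\rea{\chi}{\g_{-1}}\otimes L^0$ as $K$-vector spaces. Since $\g_{-1}$ is purely odd, $\rea{\chi}{\g_{-1}}=\bigwedge\g_{-1}$ has dimension $2^{\dim\g_{-1}}$, and for each type I basic classical superalgebra one has $\dim\g_{-1}=\dim\g_1$. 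Hence
\[
\dim L \;=\; 2^{\dim\g_1}\,\dim L^{\g_1}.
\]

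Finally, I would restrict $L$ to the algebra $\rea{\chi}{\g_1}=\bigwedge\g_1$, which is local and self-injective (Frobenius) with unique simple module $K$ and injective envelope of $K$ equal to the regular module. The socle of $L$ over this algebra is precisely $L^{\g_1}\cong K^{\,r}$ with $r=\dim L^{\g_1}$, so $L$ embeds into the injective envelope of its socle, namely $(\bigwedge\g_1)^{\,r}$, whose dimension is $2^{\dim\g_1}\,r=\dim L$. The embedding must therefore be an isomorphism, so $L$ is free of rank $r$ over $\rea{\chi}{\g_1}$. The only genuine subtlety is this final step: the PBW decomposition naturally displays $L$ as free over $\rea{\chi}{\g_{-1}}$, whereas the claim concerns $\rea{\chi}{\g_1}$, and it is precisely the Frobenius self-duality of the exterior algebra that bridges this asymmetry.
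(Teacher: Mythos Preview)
Your proof is correct, and the first half coincides with the paper's: both identify $L$ with $\psi(L^{\g_1})$ via Propositions~\ref{prop:Kac module} and~\ref{prop:g_1-functor}, which immediately exhibits $L$ as free over $\rea{\chi}{\g_{-1}}$. The divergence is in the last step. The paper simply appeals to the symmetry between $\g_1$ and $\g_{-1}$: rerunning the same argument with the opposite $\Z$-grading (so that $\g_{-1}$ plays the role of $\g_1$) yields $L\cong\rea{\chi}{\g}\otimes_{\rea{\chi}{\ev\g\oplus\g_{-1}}}L^{\g_{-1}}$, which is visibly $\rea{\chi}{\g_1}$-free. You instead remain with the original grading and use that $\rea{\chi}{\g_1}=\bigwedge\g_1$ is a local Frobenius (hence self-injective) algebra: the socle of $L$ over it is $L^{\g_1}$, so $L$ embeds in $(\bigwedge\g_1)^{\dim L^{\g_1}}$, and the dimension identity $\dim L=2^{\dim\g_1}\dim L^{\g_1}$ forces equality. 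The paper's route is quicker but tacitly asks the reader to verify that the typicality hypothesis and the preceding propositions survive the interchange of $\g_1$ and $\g_{-1}$; your route is more self-contained and isolates the purely algebraic mechanism---any module over a local Frobenius algebra whose dimension equals the regular dimension times the socle dimension is free---at the cost of invoking that structural fact.
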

\begin{proof}
By Proposition~\ref{prop:Kac module}, the natural map
$\psi(L^{\g_1}) \ra L$ is an isomorphism. Then $L$ is
$\rea{\chi}{\g_{-1}}$-free. Interchanging the roles of $\g_1$ and
$\g_{-1}$, it follows that $L$ is a free $\rea{\chi}{\g_1}$-module.
\end{proof}

\subsection{}
For $\rea{\chi}{\g}$-modules $M$ and $N$, we write $\text{Ext}(M,N)$
for the (degree 1) extensions between $N$ and $M$ in the category
$\rea{\chi}{\g}$-mod.

Here it is also worth noting that, since $\g_1$ is abelian,
$\rea{\chi}{\g_1}=\bigwedge\g_1$.

\begin{lemma}\label{lem:H1}
Let $L$ be a typical irreducible $\rea{\chi}{\g}$-module. Then
$H^1(\g_1, L)=0$.
\end{lemma}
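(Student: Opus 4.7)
The plan is to reduce $H^1(\g_1, L) = 0$ to a standard vanishing statement for Ext over an exterior algebra. As the paper observes just before the lemma, since $\g_1$ is abelian and purely odd, its enveloping superalgebra coincides with $\bigwedge \g_1$, and there is no $p$-power relation to impose, so $\rea{\chi}{\g_1} = U(\g_1) = \bigwedge \g_1$. Via the Chevalley-Eilenberg (or bar) resolution of the trivial module, this gives the standard identification
\[
H^1(\g_1, L) \cong \text{Ext}^1_{\bigwedge \g_1}(K, L),
\]
where $K$ denotes the trivial $\bigwedge \g_1$-module.

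After replacing $\chi$ by a $\ev G$-conjugate so that $\chi(\ev \n^+) = 0$ (which is harmless since it does not affect the cohomology in question up to isomorphism), Corollary~\ref{cor:ME-simple-typical} supplies the essential input: the typical simple module $L$ is free over $\rea{\chi}{\g_1} = \bigwedge \g_1$. The key structural fact I would then invoke is that the exterior algebra $\bigwedge \g_1$ is a finite-dimensional Frobenius algebra, so projective and injective modules coincide. Hence $L$, being free (and thus projective), is injective as a $\bigwedge \g_1$-module, which immediately yields $\text{Ext}^1_{\bigwedge \g_1}(K, L) = 0$.

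The main delicate point is the identification $H^1(\g_1, L) \cong \text{Ext}^1_{\bigwedge \g_1}(K, L)$, since one must handle super sign conventions carefully; however, because $\g_1$ is purely odd and abelian, this reduces to the well-known statement that Ext over $U(\g_1)$ computes Lie superalgebra cohomology. As a self-contained alternative, one can argue directly with $1$-cocycles: by freeness, $L \cong \bigwedge \g_1 \otimes V$ for some vector space $V$, and any linear map $f: \g_1 \to L$ satisfying the super-cocycle condition $x f(y) + y f(x) = 0$ for all $x, y \in \g_1$ can be shown to be of the form $f(x) = x \cdot m$ for a suitable $m \in L$, by solving for $m$ using the bottom degree components of $f$ in the natural filtration of $\bigwedge \g_1 \otimes V$.
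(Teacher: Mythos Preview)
Your argument is correct. Both you and the paper begin by invoking Corollary~\ref{cor:ME-simple-typical} to obtain that $L$ is free over $\rea{\chi}{\g_1}=\bigwedge\g_1$, thereby reducing to a statement about $\bigwedge\g_1$-modules; from there the routes diverge. Your primary argument is homological: you identify $H^1(\g_1,-)$ with $\text{Ext}^1_{\bigwedge\g_1}(K,-)$ and then appeal to the fact that $\bigwedge\g_1$ is a Frobenius (hence self-injective) algebra, so a free module is injective and all higher Ext against it vanishes. The paper instead reduces to $H^1(\g_1,\bigwedge\g_1)=0$ and isolates this as an elementary linear-algebra fact (its Lemma~\ref{lem:exterior}): any linear map $f:V\to\bigwedge V$ with $x\wedge f(x)=0$ for all $x\in V$ satisfies $f(x)=x\wedge u$ for some fixed $u$. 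After polarization this is precisely the statement that every $1$-cocycle is a coboundary, and it is essentially what your ``self-contained alternative'' sketches. Your Frobenius route is more conceptual and instantly yields vanishing in all positive degrees; the paper's route is more elementary and self-contained, avoiding the $\text{Ext}$ identification and self-injectivity, at the price of being tailored to degree one.
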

\begin{proof}
Thanks to Corollary~\ref{cor:ME-simple-typical}, we only need to
prove that
\[
H^1(\g_1, \bigwedge\g_1)=0,
\]
which is a consequence of the following fact in linear algebra.
\end{proof}

\begin{lemma}\label{lem:exterior}
Let $V$ be a finite-dimensional vector space, and let $f : V \ra
\bigwedge V$ be a linear map satisfying $x \wedge f(x)=0$ for any $x
\in V$. Then $f(x)= x \wedge u$ for some $u \in \bigwedge V$ and all
$x \in V$.
\end{lemma}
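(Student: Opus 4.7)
The plan is induction on $n:=\dim V$. After choosing a basis and using the decomposition of $\bigwedge V$ coming from splitting off one coordinate, the cocycle-like hypothesis $x\wedge f(x)=0$ will split into (i) a smaller instance of the lemma that I can feed to the inductive hypothesis and (ii) two algebraic identities that tell me how to assemble $u$ explicitly.

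More precisely, the base case $n=1$ is immediate: writing $f(e_1)=a+be_1$ with $a,b\in K$, the relation $e_1\wedge f(e_1)=ae_1=0$ forces $a=0$, and $u:=b$ works. For the inductive step, fix a basis $e_1,\dots,e_n$ of $V$, set $V':=\operatorname{span}(e_1,\dots,e_{n-1})$, and use the direct sum decomposition $\bigwedge V=\bigwedge V'\oplus e_n\wedge\bigwedge V'$ to write $f(e_i)=b_i+e_n\wedge c_i$ with $b_i,c_i\in\bigwedge V'$. Since the characteristic is not $2$, polarizing the hypothesis gives $e_i\wedge f(e_j)+e_j\wedge f(e_i)=0$ for all $i,j$. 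Expanding this identity in each of the three cases ($i,j<n$; $i<n,\ j=n$; $i=j=n$) and projecting onto the two summands of $\bigwedge V$ separately will yield the following clean set of relations:
\begin{enumerate}
\item[(a)] $e_i\wedge c_j+e_j\wedge c_i=0$ for all $i,j<n$;
\item[(b)] $b_i=e_i\wedge c_n$ for all $i<n$;
\item[(c)] $b_n=0$.
\end{enumerate}

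Relation (a) is exactly the hypothesis of the lemma applied to the linear map $f':V'\to\bigwedge V'$ defined by $f'(e_i)=c_i$ (the case $i=j$ giving $e_i\wedge c_i=0$), so by the inductive hypothesis there exists $u'\in\bigwedge V'$ with $c_i=e_i\wedge u'$ for all $i<n$. I would then set
\[
u:=c_n-e_n\wedge u'\in\bigwedge V
\]
and check directly, for $i<n$, that
\[
e_i\wedge u=e_i\wedge c_n+e_n\wedge e_i\wedge u'=b_i+e_n\wedge c_i=f(e_i),
\]
while for $i=n$, $e_n\wedge u=e_n\wedge c_n=b_n+e_n\wedge c_n=f(e_n)$ using (c). Linearity in $x$ then extends the identity from basis vectors to all of $V$.

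The only potentially subtle point is the bookkeeping of the two components of $\bigwedge V$ and the sign $e_i\wedge e_n=-e_n\wedge e_i$ when expanding the polarized identity; everything else is routine. In particular, the extraction of (c) from the $i=j=n$ case uses that $e_n\wedge\colon\bigwedge V'\hookrightarrow\bigwedge V$ is injective, which is how the assumption $e_n\wedge b_n=0$ upgrades to $b_n=0$. No further input beyond elementary multilinear algebra is needed.
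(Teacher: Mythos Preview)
Your argument is correct. The induction is set up cleanly, the polarization step (valid since the paper works in characteristic $p>2$) gives exactly the relations you list, and the construction $u=c_n-e_n\wedge u'$ together with the verifications for $i<n$ and $i=n$ is accurate; the extra relations one also obtains from the $\bigwedge V'$-component in the cases $i,j<n$ and $i<n,\ j=n$ are redundant, being consequences of your (b) and (c).

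As for comparison with the paper: there is nothing to compare. The paper states this lemma as an elementary linear-algebra fact and gives no proof of it at all, so your write-up in fact supplies the omitted argument.
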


For a $\rea{\chi}{\g}$-module $N$, $N^*$ denotes the dual
$\rea{-\chi}{\g}$-module.

\begin{lemma}\label{lem:dual}
Let $L$ be a typical irreducible $\rea{\chi}{\g}$-module. Then $L^*$
is a typical irreducible $\rea{-\chi}{\g}$-module.
\end{lemma}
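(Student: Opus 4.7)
The plan is to handle the two claims separately: $L^*$ is irreducible, which is standard, and $L^*$ is typical, which is the substantive point. Irreducibility of $L^*$ follows because the lattice of submodules of $L^*$ is opposite to that of $L$, so simplicity is preserved under taking duals. For typicality, the strategy is to identify $(L^*)^{\g_1}$ explicitly as an $\rea{-\chi}{\ev\g}$-module and then verify typicality of its highest weights by a direct computation; we may assume $\chi(\ev\n^+)=0$ after an $\ev G$-conjugation so that Corollary~\ref{cor:ME-simple-typical} applies.

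First I would identify $(L^*)^{\g_1}$ with $(L/\g_1 L)^*$ as $\rea{-\chi}{\ev\g}$-modules, using the standard pairing: $f \in L^*$ is killed by $\g_1$ iff $f$ annihilates $\g_1 L$. Next I would identify $L/\g_1 L$ with $L^{\g_{-1}}$ as $\ev\g$-modules. For this step I would exploit the $\Z$-grading on $L \cong \psi(L^{\g_1})$ inherited from that on $\g$: the subspace $L^{\g_{-1}}$ sits in the bottom degree, while $\g_1 L$ is strictly above the bottom (since $\g_1$ raises the degree by one), so the composition $L^{\g_{-1}} \hookrightarrow L \twoheadrightarrow L/\g_1 L$ is injective. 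By Corollary~\ref{cor:ME-simple-typical} (and its analogue with the roles of $\g_1$ and $\g_{-1}$ swapped), $L$ is both $\rea{\chi}{\g_1}$-free and $\rea{\chi}{\g_{-1}}$-free, which forces $\dim L^{\g_{-1}} = \dim L^{\g_1} = \dim L/\g_1 L$; hence the injection is an isomorphism of $\ev\g$-modules.

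To finish the identification, I would observe that $T_- L^{\g_1} \subseteq L^{\g_{-1}}$ (since in $\rea{\chi}{\g_{-1}} = \bigwedge \g_{-1}$ any further multiplication by an element of $\g_{-1}$ annihilates $T_-$), and by dimensions $T_- L^{\g_1} = L^{\g_{-1}}$. Lemma~\ref{lem:commute} tells us $KT_-$ is a one-dimensional $\ev\g$-representation; the computation $[h, T_-] = -2\rho_1(h) T_-$ for $h \in \h$ pins down its character as $-2\rho_1$. Thus $L^{\g_{-1}} \cong L^{\g_1} \otimes K_{-2\rho_1}$ as $\ev\g$-modules, and combining everything gives
\[
(L^*)^{\g_1} \;\cong\; (L^{\g_1})^* \otimes K_{2\rho_1}
\]
as $\rea{-\chi}{\ev\g}$-modules, where $K_{2\rho_1}$ denotes the one-dimensional $\ev\g$-module of weight $2\rho_1$.

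Finally, let $\la$ be a highest weight of $L^{\g_1}$, which is typical by hypothesis. Then $-w_0\la$ is a highest weight of $(L^{\g_1})^*$ (as the negative of the lowest weight $w_0\la$ of $L^{\g_1}$), and hence $-w_0\la + 2\rho_1$ is a highest weight of $(L^*)^{\g_1}$. Using $w_0\rho_0 = -\rho_0$ and the $W$-invariance $w_0\rho_1 = \rho_1$ recorded in Section~\ref{sec:basics}, one computes $-w_0\la + 2\rho_1 + \rho = -w_0(\la + \rho)$, so for every $\beta \in \Delta_{\bar{1}}^+$,
\[
(-w_0\la + 2\rho_1 + \rho \,|\, \beta) = -(\la + \rho \,|\, w_0\beta) \ne 0,
\]
the non-vanishing following from typicality of $\la$ together with $w_0\beta \in \Delta_{\bar{1}}$. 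Thus $(L^*)^{\g_1}$ is typical, so $L^*$ is a typical $\rea{-\chi}{\g}$-module. I expect the main obstacle to be the $\ev\g$-equivariant identification $L/\g_1 L \cong L^{\g_1} \otimes K_{-2\rho_1}$, which requires combining the grading argument, the dimension counts from Corollary~\ref{cor:ME-simple-typical}, and Lemma~\ref{lem:commute} to determine the character twist; once that is in hand, everything else reduces to a short weight computation.
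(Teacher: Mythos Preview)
Your identification $(L^*)^{\g_1}\cong (L/\g_1 L)^*\cong (L^{\g_{-1}})^*\cong (L^{\g_1})^*\otimes K_{2\rho_1}$ is correct and more explicit than anything the paper does. The gap is in the last paragraph: you assert that $-w_0\la$ is a highest weight of $(L^{\g_1})^*$ ``as the negative of the lowest weight $w_0\la$ of $L^{\g_1}$.'' That is a characteristic-zero reflex. For a general $p$-character $\chi$ with $\chi(\ev\n^-)\neq 0$ (e.g.\ $\chi$ regular nilpotent), some negative root vectors act invertibly on $L^{\g_1}$ and there are no $\ev\n^-$-invariants at all, so ``the lowest weight of $L^{\g_1}$'' has no meaning. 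What you actually need is that $w_0\la$ occurs among the weights of the $\ev\n^+$-coinvariants $L^{\g_1}/\ev\n^+ L^{\g_1}$; this is true, but it requires a separate argument (for instance via the effect of the antipode on central characters combined with the linkage fact from \cite{Jan1} already quoted in the paper), not the one-liner you give.

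The paper sidesteps this entirely by a different and shorter route: it never identifies $(L^*)^{\g_1}$ or computes a weight. Freeness of $L$ over $\bigwedge\g_1$ (Corollary~\ref{cor:ME-simple-typical}) gives $\dim(L^*)^{\g_1}=\dim L^{\g_1}$, hence $\dim L^*=\dim\rea{-\chi}{\g_{-1}}\cdot\dim(L^*)^{\g_1}$; the natural surjection $\psi\bigl((L^*)^{\g_1}\bigr)\to L^*$ is then an isomorphism by dimension count, and Proposition~\ref{prop:Kac module} (read contrapositively: $\psi$ of an atypical simple is never simple) forces $(L^*)^{\g_1}$ to be typical. Your approach, once the gap is filled, yields a sharper structural description of $(L^*)^{\g_1}$; the paper's approach is quicker and avoids the modular subtlety altogether.
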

\begin{proof}
First note by Proposition~\ref{prop:Kac module} that $L \cong
\psi(L^{\g_1})$. In particular, we have
\[
\dim L^* = \dim L = \dim \rea{\chi}{\g_{-1}} \cdot \dim L^{\g_1}.
\]

Next, by Corollary~\ref{cor:ME-simple-typical}, $L$ is a free
$\bigwedge\g_1$-module. It follows from $\dim (\bigwedge\g_1)^{\g_1}
= \dim ((\bigwedge\g_1)^*)^{\g_1}$ that
\[
\dim L^{\g_1} = \dim (L^*)^{\g_1},
\]
and so
\[
\dim L^*= \dim \rea{\chi}{\g_{-1}} \cdot \dim (L^*)^{\g_1}.
\]

Now, by dimension consideration, the natural surjective
$\rea{-\chi}{\g}$-module homomorphism
\[
\psi((L^*)^{\g_1}) \ra L^*
\]
has to be an isomorphism. This can happen if and only if
$(L^*)^{\g_{1}}$ is typical (Proposition~\ref{prop:Kac module}).
Hence $L^*$ is typical.
\end{proof}

\begin{lemma}\label{lem:extension}
We have $\text{Ext}(L, L')=0=\text{Ext}(L', L)$, for a typical
simple $\rea{\chi}{\g}$-module $L$ and an atypical simple
$\rea{\chi}{\g}$-module $L'$.
\end{lemma}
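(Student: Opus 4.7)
The plan is to establish $\text{Ext}^1(L', L) = 0$ directly and then obtain $\text{Ext}^1(L, L') = 0$ by duality via Lemma~\ref{lem:dual}. For the first vanishing, consider an extension $0 \to L \to E \to L' \to 0$ and take $\g_1$-invariants. Lemma~\ref{lem:H1} gives $H^1(\g_1, L) = 0$, so the long exact sequence collapses to a short exact sequence
\[
0 \to L^{\g_1} \to E^{\g_1} \to (L')^{\g_1} \to 0
\]
of $\rea{\chi}{\ev\g}$-modules. Both outer terms are simple by Proposition~\ref{prop:g_1-functor}: $L^{\g_1}$ is typical and $(L')^{\g_1}$ is atypical. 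Simples of $\rea{\chi}{\ev\g}$ in a common block have highest weights in a single $W$-orbit under the dot action, and typicality is preserved by $W$ (as noted before Proposition~\ref{prop:Kac module}); hence the two simples lie in different blocks and the sequence splits, yielding $E^{\g_1} = L^{\g_1} \oplus N$ with $N \cong (L')^{\g_1}$.

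Because $\g_1$ annihilates $N$, the subspace $N$ is a $\rea{\chi}{\g_+}$-submodule of $E$. By the $(\psi, \text{Res})$-adjunction its inclusion into $E$ extends to a $\rea{\chi}{\g}$-homomorphism $\alpha \colon \psi(N) \to E$. Composition with the quotient $E \twoheadrightarrow L'$ recovers the canonical nonzero surjection $\psi((L')^{\g_1}) \twoheadrightarrow L'$, so $\text{Im}(\alpha) + L = E$. Simplicity of $L$ leaves two cases for $\text{Im}(\alpha) \cap L$: if it is $0$ we obtain the splitting $E = L \oplus \text{Im}(\alpha)$; if it equals $L$ then $\alpha$ is surjective. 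The main obstacle is to rule out this second case, which would force the typical $L$ to be a composition factor of $\psi((L')^{\g_1})$. I expect to exclude this via a central-character argument: typicality of a simple $\rea{\chi}{\g}$-module should be a $Z(U(\g))$-invariant, separated by a Harish-Chandra-type lift of the element $T_+T_-$ of Lemma~\ref{lem:commute} to the center of $U(\g)$. If so, every composition factor of the Kac module $\psi((L')^{\g_1})$ has the same $Z(U(\g))$-central character as $L'$ and hence is atypical, so $L$ cannot appear.

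For $\text{Ext}^1(L, L') = 0$, Lemma~\ref{lem:dual} shows that $L^*$ is a typical simple $\rea{-\chi}{\g}$-module, and since duality is an involution on simples that preserves typicality, it must also preserve atypicality by a counting argument; thus $L'^*$ is atypical. The identification $\text{Ext}^1_{\rea{\chi}{\g}}(L, L') \cong \text{Ext}^1_{\rea{-\chi}{\g}}(L'^*, L^*)$ induced by duality then reduces this direction to the case already handled.
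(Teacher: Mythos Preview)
Your overall strategy coincides with the paper's: for $\text{Ext}(L',L)=0$ you pass to $\g_1$-invariants, invoke $H^1(\g_1,L)=0$ (Lemma~\ref{lem:H1}) to obtain a short exact sequence of $\rea{\chi}{\ev\g}$-modules, split it via the linkage principle (typical and atypical weights lie in different $W$-orbits), and then attempt to lift the splitting; for $\text{Ext}(L,L')=0$ you dualize exactly as the paper does, using Lemma~\ref{lem:dual} and the identification $\text{Ext}_{\rea{\chi}{\g}}(L,L')\cong\text{Ext}_{\rea{-\chi}{\g}}((L')^*,L^*)$.

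The genuine gap is in your lifting step. You induce only the summand $N\cong (L')^{\g_1}$ and examine the image of $\alpha\colon\psi(N)\to E$; this leaves the case $\text{Im}(\alpha)\cap L=L$ unresolved, and your proposed remedy---producing a central element of $U(\g)$ whose Harish-Chandra value is essentially $P_1$---is only a hope, not an argument available from the paper's toolkit in characteristic $p$. The paper does \emph{not} attack this obstruction head-on. Instead it applies $\psi$ to the \emph{entire} split sequence $0\to L^{\g_1}\to M^{\g_1}\to (L')^{\g_1}\to 0$ and, via Frobenius reciprocity (the adjunction counits $\psi\circ\phi\to\text{id}$), assembles the commutative diagram with exact rows
\[
\begin{CD}
0 @>>> \psi(L^{\g_1}) @>>> \psi(M^{\g_1}) @>>> \psi((L')^{\g_1}) @>>> 0 \\
@. @VV{\cong}V @VVV @VVV @. \\
0 @>>> L @>>> M @>>> L' @>>> 0
\end{CD}
\]
whose top row is split. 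Because the leftmost vertical arrow is an isomorphism (Proposition~\ref{prop:Kac module}), the paper concludes directly that the splitting of the top row forces the bottom row to split. So rather than isolating a single induced submodule and then needing to exclude $L$ from the composition series of the atypical Kac module, the paper keeps all three counit maps in play simultaneously; if you wish to align your write-up with the paper, replace your partial induction of $N$ by this full diagram and invoke the splitting from there.
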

\begin{proof}
We prove first that $\text{Ext}(L', L)=0$. Let
\begin{equation}\label{equ:extension-g}
0 \ra L \ra M \ra L' \ra 0
\end{equation}
be an extension of $L'$ by $L$ in $\rea{\chi}{\g}$-mod.

By Lemma~\ref{lem:H1}, $H^1(\g_1,L)=0$, and so the long exact
sequence of $\g_1$-cohomology associated to the short exact
sequence~(\ref{equ:extension-g}) gives rise to a short exact
sequence of $\rea{\chi}{\ev\g}$-modules
\begin{equation}\label{equ:extension-g0}
0 \ra L^{\g_1} \ra M^{\g_1} \ra (L')^{\g_1} \ra 0.
\end{equation}
Note $L^{\g_1}$ and $(L')^{\g_1}$ are irreducible
$\rea{\chi}{\ev\g}$-modules by Proposition~\ref{prop:g_1-functor}.
The sequence (\ref{equ:extension-g0}) splits by the linkage
principle of $\rea{\chi}{\ev \g}$ (see, for example,
\cite[Propositions~C.4 and C.5]{Jan2}) since typical and atypical
weights are not linked by $W$.

View the $\rea{\chi}{\ev \g}$-modules as $\rea{\chi}{\g_+}$-modules
with trivial $\g_1$-action and induce to get a split exact sequence
(noting that the induction functor is exact) in $\rea{\chi}{\g}$-mod
\begin{equation}\label{equ:extension-go-ind}
0 \ra \psi(L^{\g_1}) \ra \psi(M^{\g_1}) \ra \psi((L')^{\g_1}) \ra 0.
\end{equation}
The Frobenius reciprocity allows us to complete
(\ref{equ:extension-g}) and (\ref{equ:extension-go-ind}) into a
commutative diagram with exact rows.
\[
\CD
 0 @> >> \psi(L^{\g_1}) @> >>  \psi(M^{\g_1}) @> >>  \psi((L')^{\g_1}) @> >>0\\
 && @V  VV @V  VV @V  VV  \\
 0@> >> L @> >> M @> >> L' @> >>0
\endCD
\]
By Proposition~\ref{prop:Kac module}, the first vertical arrow is
actually an isomorphism. Now it follows from the splitting of the
top row that the bottom row also splits.

To prove $\text{Ext}_{\rea{\chi}{\g}}(L,L')=0$, we simply note that
\[
\text{Ext}_{\rea{\chi}{\g}}(L,L')=
\text{Ext}_{\rea{-\chi}{\g}}((L')^*, L^*).
\]
Then the identity follows from Lemma~\ref{lem:dual} and the first
part of the proof.
\end{proof}

It follows from induction on the length of composition series that
the $\rea{\chi}{\g}$-modules (resp. $\rea{\chi}{\ev \g}$) whose
composition factors are all typical form a full subcategory of
$\rea{\chi}{\g}$-mod (resp. $\rea{\chi}{\ev\g}$-mod), which we
denote by $\rea{\chi}{\g}$-$\text{mod}^{\text{ty}}$ (resp.
$\rea{\chi}{\ev \g}$-$\text{mod}^{\text{ty}}$).

Now we are at a position to state and prove the main result of this
paper.

\begin{theorem}\label{thm:ME}
The functors
\begin{equation*}
\qquad\phi=-^{\g_1}: \rea{\chi}{\g}\text{-}\text{mod}^{\text{ty}}
\rightarrow \rea{\chi}{\ev \g}\text{-}\text{mod}^{\text{ty}}
\end{equation*}
and
\begin{equation*}\psi=\rea{\chi}{\g} \otimes_{\rea{\chi}{\g_+}} -:
\rea{\chi}{\ev \g}\text{-}\text{mod}^{\text{ty}} \rightarrow
\rea{\chi}{\g}\text{-}\text{mod}^{\text{ty}}
\end{equation*}
are inverse equivalence of categories.
\end{theorem}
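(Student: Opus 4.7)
The plan is to verify that both $\phi$ and $\psi$ send typical subcategories to typical subcategories, and then show that the standard unit $m \mapsto 1\otimes m$ and counit $u\otimes m \mapsto u m$ are natural isomorphisms by reducing to the simple case via d\'evissage along composition series.

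\emph{Well-definedness.} The functor $\psi$ is an induction, hence exact. For $M^0 \in \rea{\chi}{\ev\g}$-$\text{mod}^{\text{ty}}$ with simple composition factors $L^0_i$, applying $\psi$ to a composition series filters $\psi(M^0)$ with successive quotients $\psi(L^0_i)$; each is simple by Proposition~\ref{prop:Kac module} and typical by Proposition~\ref{prop:g_1-functor} (since $\psi(L^0_i)^{\g_1} \cong L^0_i$). To make $\phi = (-)^{\g_1}$ behave well on the typical subcategory, I would first upgrade Lemma~\ref{lem:H1} to the statement that $H^1(\g_1, M) = 0$ for every $M \in \rea{\chi}{\g}$-$\text{mod}^{\text{ty}}$, by inducting on composition length via the long exact cohomology sequence attached to $0 \to L \to M \to M/L \to 0$, where $L$ is a simple typical submodule. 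This vanishing makes $\phi$ exact on $\rea{\chi}{\g}$-$\text{mod}^{\text{ty}}$, and its image lands in $\rea{\chi}{\ev\g}$-$\text{mod}^{\text{ty}}$ because the composition factors of $M^{\g_1}$ are precisely the $L_i^{\g_1}$, each simple typical by Proposition~\ref{prop:g_1-functor}.

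\emph{Unit and counit.} For simple typical modules, Proposition~\ref{prop:g_1-functor} gives directly that $\epsilon_{L^0} \colon L^0 \to \psi(L^0)^{\g_1}$ is an isomorphism, and the counit $\eta_L \colon \psi(L^{\g_1}) \to L$ is a nonzero map between two simples (the source being simple by Proposition~\ref{prop:Kac module} applied to $L^{\g_1}$), so it too is an isomorphism. For a general $M \in \rea{\chi}{\g}$-$\text{mod}^{\text{ty}}$, pick a simple submodule $L \subset M$ and apply the exact composition $\psi \circ \phi$ to $0 \to L \to M \to M/L \to 0$, obtaining the commutative diagram
\[
\CD
 0 @> >> \psi(L^{\g_1}) @> >> \psi(M^{\g_1}) @> >> \psi((M/L)^{\g_1}) @> >> 0 \\
 && @V\eta_L VV @V\eta_M VV @V\eta_{M/L} VV \\
 0 @> >> L @> >> M @> >> M/L @> >> 0
\endCD
\]
with exact rows. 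Induction on composition length together with the five-lemma then forces $\eta_M$ to be an isomorphism, and a symmetric argument on $\rea{\chi}{\ev\g}$-$\text{mod}^{\text{ty}}$ does the same for $\epsilon_{M^0}$.

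\emph{Main obstacle.} The only nonroutine step is the $H^1$-vanishing on the full typical subcategory, which is what makes $\phi$ exact and thus allows the d\'evissage to succeed. It is here that the restriction to typical composition factors is used essentially: Lemma~\ref{lem:H1} handles simples, and everything else propagates through long exact sequences. No further ingredients beyond those already established in Section~\ref{sec:ME} appear to be needed.
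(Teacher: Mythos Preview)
Your proposal is correct and follows essentially the same route as the paper: verify exactness of both functors, check the adjunction maps on simples via Propositions~\ref{prop:Kac module} and~\ref{prop:g_1-functor}, then d\'evissage by induction on composition length with the five-lemma. The one cosmetic difference is how exactness of $\phi$ is obtained: the paper observes directly from Corollary~\ref{cor:ME-simple-typical} that every object of $\rea{\chi}{\g}\text{-}\text{mod}^{\text{ty}}$ is free over $\rea{\chi}{\g_1}=\bigwedge\g_1$ (extensions of free modules over a local Frobenius algebra being free), whence $\phi$ is exact; your inductive upgrade of Lemma~\ref{lem:H1} to all typical modules amounts to the same thing.
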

\begin{proof}
By Propositions~\ref{prop:Kac module} and~\ref{prop:g_1-functor},
the adjunction morphisms $\text{id}_{\rea{\chi}{\ev
\g}\text{-}\text{mod}^{\text{ty}}} \ra \phi \circ \psi$ and $\psi
\circ \phi\ra
\text{id}_{\rea{\chi}{\g}\text{-}\text{mod}^{\text{ty}}}$ are
isomorphisms on irreducible objects. Since $\rea{\chi}{\g}$ is
$\rea{\chi}{\g_+}$-free, the functor $\psi$ is exact. It follows
from Corollary~\ref{cor:ME-simple-typical} that every module in
$\rea{\chi}{ \g}\text{-}\text{mod}^{\text{ty}}$ (being
finite-dimensional) is free over $\rea{\chi}{\g_1}$; thus $\phi$ is
also exact. To establish that the adjunction morphisms are
isomorphisms for general objects $M^0$ in
$\rea{\chi}{\ev\g}\text{-}\text{mod}^{\text{ty}}$ and $N$ in
$\rea{\chi}{\g}\text{-}\text{mod}^{\text{ty}}$, it remains to argue
inductively on the length of composition series for $M^0$ and $N$.
\end{proof}

\section{Applications of the equivalence}\label{sec:app}
In this section, we deduce various consequences of
Theorem~\ref{thm:ME}.
\subsection{} Let $\chi \in \ev \g^*$ be a semisimple $p$-character with
$\chi(\ev\n^+)=0=\chi(\ev\n^-)$. The even subalgebra $\ev \g$ of
$\g$ is a Lie algebra of a reductive group. It is shown by Rudakov
(\cite[Theorem~3]{Rud}) that a baby Verma module $Z^{0}_\chi(\la)$
for $\rea{\chi}{\ev \g}$ with $\la \in \La_\chi$ is irreducible if
and only if
\begin{equation}\label{equ:verma-g0}
(\la+\rho_0|\,\alpha) \notin \mathbb{F}_p \setminus\{0\}, \quad
\text{for all } \alpha \in \Delta^+_{\bar{0}},
\end{equation}
where $\mathbb{F}_p$ denotes the finite field of $p$ elements. The
original assumption on $\g$ is somewhat restrictive in \cite{Rud}.
This criterion was later improved by Friedlander-Parshall
(\cite[Theorem~3.5]{FP}). It holds as long as the $p$ is good for
$\ev\g$ (i.e. all $p$ for $\ev \g$ if $\g=\gl(m|n)$ or
$\mathfrak{sl}(m|n)$, and $p>2$ for $\g=\osp(2|2n)$). Note also that
Rudakov actually proved this criterion in the case of semisimple Lie
algebras, but it is easy to deduce from there the case when the Lie
algebra $\ev \g$ is reductive.

For $\la \in \La_\chi$, put
\[
P_0(\la) = \prod_{\alpha \in \Delta^+_{\bar{0}}}
((\la+\rho_0|\,\alpha)^{p-1} -1)=\prod_{\alpha \in
\Delta^+_{\bar{0}}} ((\la+\rho|\,\alpha)^{p-1} -1),
\]
then a baby Verma module $Z^{0}_\chi(\la)$ is irreducible if and
only if $P_0(\la) \neq 0$. Further put for $\la \in \La_\chi$
\[
P(\la)=P_0(\la)\cdot P_1(\la).
\]
.

\begin{theorem}\label{thm:irred-verma}
Let $\g$ be a basic classical Lie superalgebra of Type I, and let
$\chi \in \ev\g^*$ be semisimple with
$\chi(\ev\n^+)=0=\chi(\ev\n^-)$. Then for $\la \in \La_\chi$, the
baby Verma module $Z_\chi(\la)$ is irreducible if and only if
$P(\la) \neq 0$. Moreover, if $Z_\chi(\la)$ is irreducible, it is of
type $M$.
\end{theorem}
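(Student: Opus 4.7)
The plan is to exploit the isomorphism $Z_\chi(\la) \cong \psi(Z^0_\chi(\la))$ from (\ref{equ:verma}) and reduce the question to Proposition~\ref{prop:Kac module} together with the Rudakov--Friedlander--Parshall irreducibility criterion for the purely even baby Verma module $Z^0_\chi(\la)$. Two elementary properties of the induction functor $\psi$ are the workhorses: $\psi$ is exact because $\rea{\chi}{\g}$ is free as a right $\rea{\chi}{\g_+}$-module, and $\psi$ is faithful because the PBW decomposition $\rea{\chi}{\g}\cong \rea{\chi}{\g_{-1}}\otimes \rea{\chi}{\g_+}$ exhibits a natural vector-space inclusion $M^0 \hookrightarrow \psi(M^0)$ for every $\rea{\chi}{\g_+}$-module $M^0$.

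For the direction $P(\la) \neq 0 \Rightarrow Z_\chi(\la)$ irreducible: from $P_0(\la)\neq 0$ the Rudakov--Friedlander--Parshall criterion gives that $Z^0_\chi(\la)$ is irreducible, while from $P_1(\la)\neq 0$ the discussion following (\ref{equ:top-polynomial}) tells us that $\la$, and hence the simple $\rea{\chi}{\ev\g}$-module $Z^0_\chi(\la)$, is typical. Proposition~\ref{prop:Kac module} then yields irreducibility of $\psi(Z^0_\chi(\la)) = Z_\chi(\la)$, and Corollary~\ref{cor:typeM} immediately gives that it is of type $M$.

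For the converse, suppose $Z_\chi(\la)$ is irreducible. If $Z^0_\chi(\la)$ had a proper nonzero submodule $N^0$, exactness of $\psi$ would produce a short exact sequence $0 \to \psi(N^0) \to Z_\chi(\la) \to \psi(Z^0_\chi(\la)/N^0) \to 0$, and faithfulness would force both end terms to be nonzero, contradicting the irreducibility of $Z_\chi(\la)$. Hence $Z^0_\chi(\la)$ is irreducible, so $P_0(\la)\neq 0$. Applying the other direction of Proposition~\ref{prop:Kac module} to the simple module $Z^0_\chi(\la)$ whose induction $\psi(Z^0_\chi(\la)) = Z_\chi(\la)$ is irreducible, we conclude that $Z^0_\chi(\la)$ (and hence $\la$) must be typical, so $P_1(\la)\neq 0$.

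I do not anticipate any substantive obstacle: the theorem is essentially a clean repackaging of Proposition~\ref{prop:Kac module}, Corollary~\ref{cor:typeM}, and the Rudakov--Friedlander--Parshall criterion via the functor $\psi$. The one point that deserves explicit care is the faithfulness/exactness step used in the converse, where one must check that a proper submodule of $Z^0_\chi(\la)$ cannot disappear upon induction; once that is in place, the rest is formal.
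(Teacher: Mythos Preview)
Your proof is correct and follows essentially the same route as the paper's: combine the isomorphism $Z_\chi(\la)\cong\psi(Z^0_\chi(\la))$ from (\ref{equ:verma}) with the Rudakov--Friedlander--Parshall criterion, Proposition~\ref{prop:Kac module}, and Corollary~\ref{cor:typeM}. The paper states the proof in one line without spelling out the converse, so your explicit exactness/faithfulness argument showing that reducibility of $Z^0_\chi(\la)$ forces reducibility of $\psi(Z^0_\chi(\la))$ is a welcome clarification rather than a departure.
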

\begin{proof}
The first assertion follows from (\ref{equ:verma-g0}),
Proposition~\ref{prop:Kac module}, and the isomorphism
(\ref{equ:verma}). The second assertion follows from
(\ref{equ:verma}) and Corollary~\ref{cor:typeM}.
\end{proof}

For $\alpha \in \Delta$, denote by $H_\alpha =[X_\alpha,
X_{-\alpha}] \in \h$ the coroot of $\alpha$. The following lemma is
standard.

\begin{lemma}\label{lem:regular s.s.}
Let $\chi \in \ev \g^*$ be semisimple with $\chi(\ev
\n^+)=0=\chi(\ev \n^-)$. Then $\chi$ is regular semisimple (i.e.,
$\g_\chi:=\{y \in \g \vert\; \chi([y,\g])=0\}$ is a Cartan
subalgebra of $\g$) if and only if $\chi(H_\alpha)\neq 0$ for all
$\alpha \in \Delta^+$.
\end{lemma}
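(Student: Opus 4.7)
The plan is to reduce the statement to a direct root-space computation. Since $\h$ is already a Cartan subalgebra of $\g$ and the centralizer $\g_\chi$ is a subalgebra containing $\h$ under our assumptions, establishing that $\g_\chi$ is a Cartan subalgebra amounts to proving $\g_\chi = \h$. So the real task is to characterize exactly when $\g_\chi = \h$ holds. I would first note the easy inclusion $\h \subseteq \g_\chi$: for $h \in \h$ and any root vector $X_\gamma$ one has $[h,X_\gamma] = \gamma(h)X_\gamma$, which lies in a nonzero root space, and $\chi$ vanishes on every root space by the hypothesis $\chi(\ev\n^\pm)=0$ and by $\chi \in \ev\g^*$ (extending $\chi$ by zero to $\od \g$). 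Combined with $[h,\h]=0$, this shows $\chi([h,\g])=0$.

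For the forward direction, suppose there exists $\alpha \in \Delta^+$ with $\chi(H_\alpha)=0$. I would verify directly that $X_\alpha \in \g_\chi$: for any $z \in \g$, decompose $z$ along the root space decomposition $\g = \h \oplus \bigoplus_{\gamma \in \Delta} \g_\gamma$, and observe that $[X_\alpha, z]$ lies in a sum of nonzero root spaces plus (possibly) a multiple of $H_\alpha$ coming from the $\g_{-\alpha}$-component of $z$; since $\chi$ kills all nonzero root spaces and now kills $H_\alpha$ as well, we get $\chi([X_\alpha,\g])=0$. Hence $\g_\chi \supsetneq \h$, and so $\g_\chi$ cannot be a Cartan subalgebra (it properly contains one).

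For the reverse direction, assume $\chi(H_\alpha) \neq 0$ for every $\alpha \in \Delta^+$, and take an arbitrary $y = h + \sum_{\gamma \in \Delta} c_\gamma X_\gamma \in \g_\chi$. The key trick is to test $y$ against $X_{-\alpha}$ for each $\alpha \in \Delta^+$ (and against $X_\alpha$ to capture $c_{-\alpha}$). In the expansion of $[y, X_{-\alpha}]$, the only contribution to the $\h$-component is $c_\alpha [X_\alpha, X_{-\alpha}] = c_\alpha H_\alpha$, while every other summand lies in a nonzero root space and is annihilated by $\chi$. Thus $0 = \chi([y, X_{-\alpha}]) = c_\alpha \chi(H_\alpha)$, forcing $c_\alpha = 0$, and symmetrically $c_{-\alpha}=0$. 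Running this over all $\alpha \in \Delta^+$ shows $y \in \h$, giving $\g_\chi = \h$.

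The only mild obstacle I anticipate is bookkeeping for the odd roots $\beta \in \Delta^+_{\bar 1}$: there $[X_\beta, X_{-\beta}]$ is the superbracket (an anticommutator of odd elements), but it still lies in $\h$ and is the coroot $H_\beta$, so the argument goes through verbatim once one is careful that $\chi$ picks up only the $\ev\g$-part of each bracket. No further difficulty is expected.
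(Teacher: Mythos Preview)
Your argument is correct and is precisely the standard root-space computation one expects here. The paper itself does not supply a proof: it simply introduces the lemma with the remark ``The following lemma is standard.'' So there is nothing to compare against beyond noting that your direct verification---showing $\h\subseteq\g_\chi$ always, exhibiting $X_\alpha\in\g_\chi$ when $\chi(H_\alpha)=0$, and testing an arbitrary $y\in\g_\chi$ against each $X_{\mp\alpha}$ to force all root-vector coefficients to vanish when every $\chi(H_\alpha)\neq 0$---is exactly what ``standard'' points to. Your handling of the odd roots (superbracket $[X_\beta,X_{-\beta}]=H_\beta\in\h$, and $\chi$ extended by zero on $\od\g$) is the only place where the super setting intervenes, and you treat it correctly.
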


We now obtain a semisimplicity criterion for $\rea{\chi}{\g}$.

\begin{theorem}\label{thm:ss-criterion}
Let $\g$ be a basic classical Lie superalgebra of Type I, and let
$\chi \in \ev\g^*$ be semisimple with
$\chi(\ev\n^+)=0=\chi(\ev\n^-)$. Then $\rea{\chi}{\g}$ is semisimple
if and only if $\chi$ is regular semisimple.
\end{theorem}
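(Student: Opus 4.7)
The plan is to use Theorem~\ref{thm:irred-verma} as a bridge and to establish the chain
\[
\chi\text{ is regular semisimple} \iff P(\la)\neq 0 \text{ for every } \la\in\La_\chi \iff \rea{\chi}{\g} \text{ is semisimple}.
\]

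For the first equivalence, the key identity is $\la(H_\alpha)^p-\la(H_\alpha)=\chi(H_\alpha)^p$, valid for $\la\in\La_\chi$ and $\alpha\in\Delta^+$; since $K$ is perfect, $\chi(H_\alpha)=0$ iff $\la(H_\alpha)\in\mathbb{F}_p$ for every $\la\in\La_\chi$. Because $\La_\chi$ is a torsor for the $\mathbb{F}_p$-form $\h^*_{\mathbb{F}_p}$ and $H_\alpha\neq 0$, the affine map $\la\mapsto(\la+\rho_0)(H_\alpha)$ (for even $\alpha$) or $\la\mapsto(\la+\rho)(H_\beta)$ (for odd $\beta$) sweeps all of $\mathbb{F}_p$ as $\la$ ranges over $\La_\chi$ when $\chi(H_\alpha)=0$, and avoids $\mathbb{F}_p$ entirely when $\chi(H_\alpha)\neq 0$. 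Using the proportionality $(\la|\alpha)=c_\alpha\,\la(H_\alpha)$ with $c_\alpha\in\mathbb{F}_p^\times$---which for odd isotropic roots is verified by the identification $H_\beta=[X_\beta,X_{-\beta}]=t_\beta$ with the form-dual of $\beta$ in each type I basic classical case---this translates to: $\chi(H_\alpha)\neq 0$ for every even $\alpha$ iff $P_0(\la)\neq 0$ for every $\la\in\La_\chi$, and $\chi(H_\beta)\neq 0$ for every odd $\beta$ iff $P_1(\la)\neq 0$ for every $\la\in\La_\chi$. Combined with Lemma~\ref{lem:regular s.s.}, the first equivalence follows.

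For the second equivalence, if $\rea{\chi}{\g}$ is semisimple then each $Z_\chi(\la)$, being cyclic with a unique simple head, must equal its head and so is simple; Theorem~\ref{thm:irred-verma} then gives $P(\la)\neq 0$. Conversely, assume $P(\la)\neq 0$ for every $\la\in\La_\chi$. The condition $P_0(\la)\neq 0$ for every $\la$ makes every $Z^0_\chi(\la)$ simple, so $\rea{\chi}{\ev\g}$ is semisimple by the Rudakov--Friedlander--Parshall criterion (\cite{Rud,FP}). The condition $P_1(\la)\neq 0$ for every $\la$ makes every weight in $\La_\chi$ typical; since every simple $\rea{\chi}{\g}$-module is a quotient of some $Z_\chi(\la)=\psi(Z^0_\chi(\la))$, every simple is typical, so $\rea{\chi}{\g}\text{-}\text{mod}=\rea{\chi}{\g}\text{-}\text{mod}^{\text{ty}}$. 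Theorem~\ref{thm:ME} identifies this with the full subcategory $\rea{\chi}{\ev\g}\text{-}\text{mod}^{\text{ty}}$ of the semisimple category $\rea{\chi}{\ev\g}\text{-}\text{mod}$, which is itself semisimple; transporting across the equivalence yields the semisimplicity of $\rea{\chi}{\g}$.

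The main obstacle I anticipate is the first equivalence, specifically the passage from $\chi(H_\beta)=0$ to $(\la+\rho|\beta)=0$ for odd $\beta$: the $\mathfrak{sl}_2$-triple machinery used for even roots breaks down on isotropic odd roots with $(\beta|\beta)=0$, so the proportionality $(\la|\beta)\propto \la(H_\beta)$ must be checked directly across $\gl(m|n)$, $\mathfrak{sl}(m|n)$, and $\osp(2|2n)$. A secondary technical point is to confirm in the super setting that each baby Verma $Z_\chi(\la)$ admits a unique simple head---needed for the ``cyclic plus semisimple $\Rightarrow$ simple'' step---which should follow from the one-dimensionality of its $\la$-weight space coming from the PBW basis \eqref{equ:basis} together with the standard highest-weight argument.
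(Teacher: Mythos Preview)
Your proposal is correct. The first equivalence (regular semisimple $\iff$ $P(\la)\neq 0$ for all $\la\in\La_\chi$) follows exactly the paper's route via Lemma~\ref{lem:regular s.s.} and the identity $\la(H_\alpha)^p-\la(H_\alpha)=\chi(H_\alpha)^p$; the implicit proportionality $(\la|\alpha)\propto\la(H_\alpha)$ that worries you is equally glossed over in the paper. Where you diverge is in the second equivalence. The paper argues directly, via Wedderburn's theorem and the dimension count $\sum_{\la\in\La_\chi}(\dim Z_\chi(\la))^2=\dim\rea{\chi}{\g}$, that $\rea{\chi}{\g}$ is semisimple iff every $Z_\chi(\la)$ is simple of type~$M$; Theorem~\ref{thm:irred-verma} then closes the loop. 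You instead handle the two implications separately: for one direction you observe that a module with simple head over a semisimple algebra is itself simple; for the other you use Theorem~\ref{thm:ME} to transport semisimplicity from $\rea{\chi}{\ev\g}$-mod (known semisimple by the classical Rudakov/Friedlander--Parshall result) across the equivalence. The paper's approach is more elementary and self-contained, needing only Theorem~\ref{thm:irred-verma} and not the full categorical equivalence; your approach is a pleasant demonstration that Theorem~\ref{thm:ME} already subsumes this corollary, and it has the minor advantage of never touching the type-$M$/type-$Q$ dichotomy. The two technical points you flag (unique simple head of $Z_\chi(\la)$; the odd-root proportionality) are genuine but are also assumed without comment in the paper's own proof.
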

\begin{proof}
Since $\chi$ satisfies $\chi(\ev\n^+)=0=\chi(\ev\n^-)$, each baby
Verma module $Z_\chi(\la)$ for $\la \in \La_\chi$ has a unique
irreducible quotient, and they form a complete and irredundant set
of irreducible $\rea{\chi}{\g}$-modules. Now by Wedderburn Theorem
and a dimension counting argument, $\rea{\chi}{\g}$ is semisimple if
and only if all the baby Verma modules $Z_\chi(\la)$ for $\la \in
\La_\chi$ are simple (of type $M$). By
Theorem~\ref{thm:irred-verma}, $Z_\chi(\la)$ being simple for all
$\la \in \La_\chi$ is equivalent to $P(\la)\neq 0$ for all $\la \in
\La_\chi$, and by the definition of $P$, this is equivalent to (i)
$(\la + \rho)(H_\alpha) \notin \mathbb{F}_p \setminus \{0\}$ for all
$\alpha \in \Delta_{\bar{0}}^+$ and (ii) $(\la+\rho)(H_\beta) \neq
0$ for all $\beta \in \Delta^+_{\bar{1}}$.

If $\chi$ is regular semisimple, then by Lemma~\ref{lem:regular
s.s.}, $\chi(H_\alpha) \neq 0$ for all $\alpha \in \Delta^+$. It
follows that for any $\la \in \La_\chi$, $\la(H_\alpha) \notin
\mathbb{F}_p$ for all $\alpha \in \Delta^+$ since
$\la(H_\alpha)^p-\la(H_\alpha)=\chi(H_\alpha)^p$. In this situation,
both (i) and (ii) are true since $\rho(H_\alpha) \in \mathbb{F}_p$
for any $\alpha \in \Delta^+$. Hence all $Z_\chi(\la)$ are simple
and $\rea{\chi}{\g}$ is semisimple.

Conversely, if $\chi$ is not regular semisimple, then
$\chi(H_\alpha)=0$ for some $\alpha \in \Delta^+$ by
Lemma~\ref{lem:regular s.s.}. Let us assume $\alpha \in
\Delta_{\bar{0}}^+$, since the other case can be argued in a similar
fashion. Then $\la(H_\alpha) \in \mathbb{F}_p$ for $\la \in
\La_\chi$. Since shifting the value of $\la(H_\alpha)$ by a number
in $\mathbb{F}_p$ will still result in an element in $\La_\chi$, we
may thus assume $(\la + \rho)(H_\alpha) =1$. Then $P(\la)=0$ and
$Z_\chi(\la)$ is reducible by Theorem~\ref{thm:irred-verma}. Hence
$\rea{\chi}{\g}$ is not semisimple.
\end{proof}

\begin{remark}
Note that the ``if'' part of the theorem is a consequence (cf.
\cite[Corollary~5.7]{WZ1}) of the Super Kac-Weisfeiler Property
(\cite[Theorem~5.6]{WZ1}).
\end{remark}
\begin{remark}
The simplicity criterion (Theorem~\ref{thm:irred-verma}) for baby
Verma modules and the semisimplicity criterion
(Theorem~\ref{thm:ss-criterion}) for reduced enveloping algebras
with semisimple $p$-characters are valid for queer Lie superalgebras
(\cite[Theorems~3.4 \& 3.10]{WZ2}).

For all basic classical Lie superalgebras, these corresponding
criterions can also be established, using the technique of odd
reflections and generalized arguments of Rudakov \cite{Rud}.
\end{remark}


\subsection{}
\begin{proposition}
Let $\g$ be a basic classical Lie superalgebra of Type I; and for
$\g=\mathfrak{sl}(m|n)$ we further assume that $p \nmid (m-n)$. Let
$\chi\in \ev \g^*$ with $\chi(\mathfrak{b}_{\bar{0}})=0$ be regular
nilpotent. Then the baby Verma module $Z_\chi(\la)$ is simple
provided $\la$ is typical.
\end{proposition}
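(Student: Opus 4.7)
The plan is to transfer the question to the even subalgebra via the isomorphism $Z_\chi(\la) \cong \psi(Z^{0}_\chi(\la))$ of (\ref{equ:verma}) and the irreducibility criterion in Proposition~\ref{prop:Kac module}. It will suffice to establish two facts: (a) the baby Verma module $Z^{0}_\chi(\la)$ for $\rea{\chi}{\ev \g}$ is irreducible, and (b) it is typical in the sense of Section~\ref{sec:ME}.

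For (b), by construction $\la$ itself lies in $(Z^{0}_\chi(\la))^{\ev \n^+}$. As shown in Section~\ref{sec:basics}, all weights in $(L^0)^{\ev \n^+}$ for a simple $\rea{\chi}{\ev \g}$-module $L^0$ lie in a single $W$-orbit under the dot action, and typicality is $W$-invariant. Hence the assumption that $\la$ is typical forces every weight appearing in $(Z^{0}_\chi(\la))^{\ev\n^+}$, and in particular the highest weight of the unique simple quotient, to be typical; so in the typical/atypical dichotomy introduced before Proposition~\ref{prop:Kac module}, the module $Z^{0}_\chi(\la)$ is typical.

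For (a), I would invoke the classical fact, going back to Friedlander--Parshall and sharpened via the Kac--Weisfeiler/Premet dimension bound, that for a reductive Lie algebra in good characteristic and a regular nilpotent $p$-character $\chi$ with $\chi(\mathfrak{b}_{\bar 0})=0$, every baby Verma module $Z^{0}_\chi(\la)$ is already simple. The idea is that every simple $\rea{\chi}{\ev \g}$-module has dimension divisible by $p^{d(\chi)}$, where $d(\chi)=\tfrac12\dim(\ev G\cdot\chi)$; when $\chi$ is regular nilpotent, $d(\chi)=|\Delta^+_{\bar 0}|=\dim\ev\n^+$, and since $\dim Z^{0}_\chi(\la)=p^{|\Delta^+_{\bar 0}|}$ already realizes this lower bound, no proper nonzero submodule can exist. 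The hypothesis $p\nmid(m-n)$ when $\g=\mathfrak{sl}(m|n)$ is imposed precisely so that $\ev \g$ is reductive with a one-dimensional center splitting off $[\ev\g,\ev\g]$, ensuring the classical result applies without modification; the cases $\g=\gl(m|n)$ and $\g=\osp(2|2n)$ pose no such issue for $p>2$.

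Combining (a) and (b), Proposition~\ref{prop:Kac module} gives that $\psi(Z^{0}_\chi(\la))=Z_\chi(\la)$ is irreducible. The main obstacle will be pinning down the irreducibility in (a) in the reductive (rather than semisimple) setting, and verifying case-by-case that the good-prime hypothesis needed for the Friedlander--Parshall argument holds for each of $\gl(m|n)$, $\mathfrak{sl}(m|n)$ with $p\nmid(m-n)$, and $\osp(2|2n)$; everything else is a formal consequence of the machinery built up through Theorem~\ref{thm:ME}.
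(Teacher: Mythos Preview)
Your proposal is correct and follows essentially the same route as the paper: reduce to the even part via $Z_\chi(\la)\cong\psi(Z^0_\chi(\la))$ (equation~(\ref{equ:verma})), quote Premet's theorem that $Z^0_\chi(\la)$ is simple for regular nilpotent $\chi$, and then apply Proposition~\ref{prop:Kac module}. The paper's proof is a two-line version of yours, citing \cite[Corollary~3.11]{Pr} directly rather than unpacking the Kac--Weisfeiler dimension argument; your additional discussion of step (b) and of the role of the hypothesis $p\nmid(m-n)$ is correct but is left implicit in the paper.
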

\begin{proof}
Since $\chi$ is regular nilpotent for $\ev \g$ the baby Verma module
$Z^{0}_\chi(\la)$ for $\ev \g$ is simple by Premet's theorem
(\cite[Corollary~3.11]{Pr}). Hence $Z_\chi(\la)$ is simple by
(\ref{equ:verma}) and Proposition~\ref{prop:Kac module}.
\end{proof}

\subsection{}
Assume $\chi \in \ev \g^*$ is nilpotent with
$\chi(\ev{\mathfrak{b}})=0$. Recall that $\la \in \La_0$ is both
typical and regular if and only if $(\la+ \rho)(H_\alpha) \neq 0$
for all $\alpha \in \Delta^+$. The central character
$\text{cen}_\la$ on $U(\ev \g)^{\ev G}$ will determine a block in
$\rea{\chi}{\ev \g}\text{-}\text{mod}^{\text{ty}}$ and hence a block
in $\rea{\chi}{\g}\text{-}\text{mod}^{\text{ty}}$ via the
equivalence in Theorem~\ref{thm:ME}. Such a block is called typical
and regular.

\begin{remark}\label{rem:typical-regualr}
We note that if $\la \in \La_0$ is both typical and regular then the
characteristic $p$ of the base field $K$ is necessarily greater than
$h(\ev \g)$, the Coxeter number of $\ev \g$, which is $\max(m,n)$
for $\g=\gl(m|n)$ or $\mathfrak{sl}(m|n)$ and is $2n$ for
$\g=\osp(2|2n)$.
\end{remark}

Let $\ev B$ be the Borel subgroup of $\ev G$ with $\text{Lie}\, {\ev
B}=\ev{\mathfrak{b}}$, and let $\mathcal{B}^{0}$ be the flag
manifold of $\ev G$. Denote the Springer fiber of $\chi \in \ev
\g^*$ in $\mathcal{B}^0$ by $\mathcal{B}^0_\chi$.

\begin{theorem}\label{thm:BMR}
Let $\chi \in \ev \g^*$ be nilpotent with
$\chi(\ev{\mathfrak{b}})=0$. Then the number of simple modules in
any typical and regular block in
$\rea{\chi}{\g}\text{-}\text{mod}^{\text{ty}}$ is equal to the rank
of the Grothendieck group of the category of coherent sheaves on the
Springer fiber $\mathcal{B}^0_\chi$. This rank is also equal to the
dimension of the $l$-adic cohomology of $\mathcal{B}^0_\chi$.
\end{theorem}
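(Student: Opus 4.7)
The strategy is to reduce the problem for $\g$ to the known statement for the even subalgebra $\ev\g$ via the category equivalence of Theorem~\ref{thm:ME}, and then invoke the Bezrukavnikov--Mirkovi\'c--Rumynin proof of Lusztig's conjecture \cite{BMR} for the reductive Lie algebra $\ev\g$.

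First, I would unpack what a ``typical and regular block in $\rea{\chi}{\g}\text{-}\text{mod}^{\text{ty}}$'' means: by construction it is the image under $\psi$ of a typical and regular block of $\rea{\chi}{\ev\g}\text{-}\text{mod}^{\text{ty}}$, determined by a central character $\text{cen}_\la$ with $\la \in \La_0$ typical and regular. Since $\phi$ and $\psi$ are inverse equivalences by Theorem~\ref{thm:ME}, they induce a bijection between isomorphism classes of simple objects of corresponding blocks. Consequently, the number of simple $\rea{\chi}{\g}$-modules in the chosen typical regular block equals the number of simple $\rea{\chi}{\ev\g}$-modules in the corresponding regular block.

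Next I would appeal to \cite{BMR}. Under the hypothesis that $\chi(\ev{\mathfrak{b}})=0$ and that $p > h(\ev\g)$ (automatic by Remark~\ref{rem:typical-regualr} once a typical regular weight exists), Bezrukavnikov--Mirkovi\'c--Rumynin prove Lusztig's conjecture: for the reductive Lie algebra $\ev\g$, there is an equivalence of derived categories between the regular block of $\rea{\chi}{\ev\g}$-mod with central character $\text{cen}_\la$ and the bounded derived category of coherent sheaves on the Springer fiber $\mathcal{B}^0_\chi$ (up to a formal completion). In particular, passing to Grothendieck groups shows that the number of simple objects in this block equals the rank of $K_0(\text{Coh}(\mathcal{B}^0_\chi))$. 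Combining this with the bijection from the previous step yields the first equality of the theorem.

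Finally, for the second equality I would invoke the standard fact (part of the \cite{BMR} package, and independently known from the geometry of Springer fibers) that the rank of $K_0(\text{Coh}(\mathcal{B}^0_\chi))$ coincides with $\dim H^*_{\text{\'et}}(\mathcal{B}^0_\chi,\mathbb{Q}_l)$; this follows because $\mathcal{B}^0_\chi$ admits an affine paving, so the cycle map from $K$-theory to cohomology is an isomorphism on ranks. There is no real obstacle in the argument: once Theorem~\ref{thm:ME} is in hand the proof is essentially a citation. The only point requiring a little care is to verify that the notion of ``regular typical block'' on the super side matches exactly the notion of ``regular block'' on the even side used in \cite{BMR}, which is immediate from the definition via $\text{cen}_\la$ and the fact that $\la$ typical regular in particular means $\la$ is regular for $\ev\g$.
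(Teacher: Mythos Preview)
Your proposal is correct and follows essentially the same approach as the paper: reduce via the equivalence of Theorem~\ref{thm:ME} to the corresponding statement for $\rea{\chi}{\ev\g}$, and then cite \cite{BMR}. The paper's proof is just a terser version of what you wrote, invoking \cite[Corollary~5.4.3 and Theorem~7.1.1]{BMR} directly without spelling out the derived equivalence or the affine paving argument.
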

\begin{proof}
By Theorem~\ref{thm:ME}, the theorem is equivalent to the
corresponding statement for $\rea{\chi}{\ev \g}$, which was
conjectured by Lusztig and established in
\cite[Corollary~5.4.3~and~Theorem~7.1.1]{BMR}.
\end{proof}

\subsection{} Assume now $\chi=0$. Let $T$ be the maximal torus of
$\ev G$ with $\text{Lie}\, T=\h$, and let $X$ be the character group
of $T$, with standard bilinear form $(,)$. The typicality of a
weight can also be extended to $X$ in an obvious way. The enveloping
algebra $U(\g)$ (resp. $U(\ev\g)$) is in a natural way $X$-graded
such that each $h \in \h$ has degree $0$ and each $X_\alpha$ has
degree $\alpha$ for $\alpha \in \Delta$ (resp. $\ev \Delta$). This
grading induces an $X$-grading on $\rea{0}{\g}$ (resp. $\rea{0}{\ev
\g}$).

For any $\la \in X$, define the graded baby Verma module
$\hat{Z}(\la)$ (resp. $\hat{Z}^0(\la)$) for $\rea{0}{\g}$ (resp.
$\rea{0}{\ev\g}$) in the same fashion as in
Section~\ref{sec:elerepn}, whose grading is given by declaring that
each basis element $\prod_{\alpha \in
\Delta_{\bar{0}}^+}X_{-\alpha}^{a_\alpha} \cdot \prod_{\beta \in
\Delta_{\bar{1}}^+}X_{-\beta}^{c_\beta} \otimes 1_\la$  (resp.
$\prod_{\Delta_{\bar{0}}^+}X_{-\alpha}^{a_\alpha} \otimes 1_\la$)
has degree $\la -\sum_{\alpha \in \ev \Delta^+} a_\alpha \alpha
-\sum_{\beta \in \od \Delta^+} c_\beta \beta$ (resp. $\la
-\sum_{\alpha \in \ev \Delta^+} a_\alpha \alpha $). For each
$\hat{Z}(\la)$ (resp. $\hat{Z}^0(\la)$), the unique irreducible
quotient $\hat{L}(\la)$ (resp. $\hat{L}^0(\la)$) is thus $X$-graded.
For $\la \in X$, denote
\[ \hat{V}(\la) := \hat{Z}(\la -2(p-1)\rho_0),
\quad (\text{resp. }\hat{V}^0(\la) = \hat{Z}^0(\la -2(p-1)\rho_0)).
\]
Note that
\[
\hat{Q}(\la)= \rea{0}{\g} \otimes_{\rea{0}{\g_+}} \hat{Q}^0(\la),
\]
for $Q=Z, L$ and $V$.

Denote by $\mathcal{C}$ (resp. $\mathcal{C}^0$) the category all
$X$-graded $\rea{0}{\g}$- (resp. $\rea{0}{\ev \g}$-)modules $M$
satisfying the condition that $\h$ acts on the $\gamma$-degree space
$M_\gamma$ via the tangent map $d(\gamma)$ for all $\gamma \in X$.
Thus $\mathcal{C}^0$ is identified with the category of $(\ev G)_1
T$-modules.

Let $W_p$ denote the affine Weyl group generated by the finite Weyl
group $W$ and the translations $s_{\alpha, np}$ with $\alpha \in \ev
\Delta$, $n \in \Z$. Denote the dot action of $W_p$ on $X$ by
$w.\la$ for $w \in W_p$ and $\la\in X$. Let $\uparrow$ be the
partial ordering on $X$ determined by reflections in the affine Weyl
group $W_p$ (cf. \cite[Section~D.7]{Jan2}).

For $\omega$ and $\tau$ in $W_p.\la$, let $\hat{P}_{\tau,\omega}$ be
the polynomial defined in \cite[Section~5]{CPS} which is in turn a
normalization of the generic Kazhdan-Lusztig polynomial of Kato
\cite{Kat}. Let $l$ be the length function on $W_p .\la$ defined in
\cite[Section~3.12.3]{CPS}.

\begin{theorem}
Assume $p > h(\ev \g)$, and assume that $\mathcal{C}^0$ has a
Kazhdan-Lusztig theory (in the sense of \cite[Definition~2.1]{CPS}).
Let $\la \in X$ be typical and satisfy $0 \leq (\la+\rho,
\alpha^{\vee})\leq p$. Then for any $\omega \in W_p .\la$, we have
\[
\text{ch}\, \hat{L}(\omega) = \sum_{\tau \uparrow
\omega}(-1)^{l(\tau)-l(\omega)} \hat{P}_{\tau, \omega}(-1)
\text{ch}\, \hat{V}(\tau).
\]
\end{theorem}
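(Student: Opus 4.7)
The plan is to deduce the formula from the analogous character formula in $\mathcal{C}^0$ (which is the content of \cite{CPS} under the Kazhdan--Lusztig hypothesis) by pushing it through an $X$-graded version of the equivalence $\psi$ of Theorem~\ref{thm:ME}.

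First I would observe that the functors $\phi$ and $\psi$ of Theorem~\ref{thm:ME} are manifestly compatible with the $X$-grading and so restrict to inverse equivalences between the typical subcategories of $\mathcal{C}$ and $\mathcal{C}^0$. Next I would check that typicality is preserved along the whole orbit $W_p.\la$: $W$-invariance of typicality is noted in the paper, and for a translation $t_{p\gamma}$, the pairing $(\la+p\gamma+\rho\mid \beta)=(\la+\rho\mid \beta)+p(\gamma\mid \beta)$ equals $(\la+\rho\mid \beta)$ in $K$ since $p=0$; consequently every $\omega\in W_p.\la$ is typical. Combined with the identities $\hat{V}(\tau)=\psi(\hat{V}^0(\tau))$ recorded just before the theorem and $\hat{L}(\omega)=\psi(\hat{L}^0(\omega))$ coming from Proposition~\ref{prop:Kac module} together with the fact that $\psi$ sends irreducibles to irreducibles in the typical category, this reduces the statement to computing characters on both sides of $\psi$.

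Secondly I would compute the effect of $\psi$ on formal characters. Since $\rea{0}{\g}$ is free over $\rea{0}{\g_+}$ with basis the PBW monomials in $\g_{-1}$, for any $M^0\in\mathcal{C}^0$ there is an $X$-graded vector space isomorphism $\psi(M^0)\cong \bigwedge\g_{-1}\otimes M^0$. The grading on $\bigwedge\g_{-1}$ yields
\[
\text{ch}\,\psi(M^0) \;=\; \text{ch}\, M^0 \cdot \prod_{\beta\in \Delta^+_{\bar 1}}(1+e^{-\beta}).
\]
Applying this to $M^0=\hat{V}^0(\tau)$ and $M^0=\hat{L}^0(\omega)$ expresses $\text{ch}\,\hat{V}(\tau)$ and $\text{ch}\,\hat{L}(\omega)$ as $\prod_{\beta}(1+e^{-\beta})$ times the corresponding characters for $\ev\g$.

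Thirdly I would invoke the main result of \cite{CPS}: the hypothesis that $\mathcal{C}^0$ carries a Kazhdan--Lusztig theory in the sense of \cite[Definition~2.1]{CPS}, together with $p>h(\ev\g)$ (automatic by Remark~\ref{rem:typical-regualr}) and $0\le (\la+\rho,\alpha^\vee)\le p$, yields
\[
\text{ch}\,\hat{L}^0(\omega)\;=\;\sum_{\tau\uparrow\omega}(-1)^{l(\tau)-l(\omega)}\hat{P}_{\tau,\omega}(-1)\,\text{ch}\,\hat{V}^0(\tau).
\]
Multiplying this identity by the factor $\prod_{\beta\in\Delta^+_{\bar 1}}(1+e^{-\beta})$ and using the character computation of the previous step produces the asserted formula. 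The main non-routine point is simply the verification that $W_p.\la$ stays inside the typical locus so that the equivalence of Theorem~\ref{thm:ME} actually covers every term appearing in the CPS formula; once this is in hand, the rest is bookkeeping.
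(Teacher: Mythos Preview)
Your proposal is correct and follows the same approach as the paper: reduce to the known character formula for $\rea{0}{\ev\g}$ via the equivalence of Theorem~\ref{thm:ME}, then invoke \cite[Theorem~5.7]{CPS}. The paper's own proof is a one-liner to this effect; you have simply unpacked the details (compatibility of $\psi$ with the $X$-grading, preservation of typicality along $W_p.\la$, and the factor $\prod_{\beta\in\Delta^+_{\bar 1}}(1+e^{-\beta})$ relating characters under $\psi$) that the paper leaves implicit.
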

\begin{proof}
By Theorem~\ref{thm:ME}, the statement of the theorem is equivalent
to the corresponding one for $\rea{0}{\ev \g}$, which is established
in \cite[Theorem~5.7]{CPS}.
\end{proof}

\begin{remark}
We note that \cite[Theorem~5.7]{CPS} together with its partial
converse \cite[Theorem~5.9]{CPS} is essentially equivalent to the
Lusztig's conjecture \cite[Section~3, Problem~IV]{Lus} under
suitable assumption on $p$ (cf. \cite[Theorem~5.5]{CPS}).
\end{remark}

\end{document}